\newcommand{\qbinom}{\genfrac{[}{]}{0pt}{}}
\newcommand{\e}{\mathrm{e}}
\newcommand{\E}{\mathrm{E}}
\newcommand{\s}{\mathrm{S}^{*}}
\newcommand{\RR}{\mathcal{R}}
\newcommand{\rr}{\mathcal{R}}
\newcommand{\rrr}{\mathrm{r}}
\newtheorem{theorem}{Theorem}
\newtheorem{definition}{Definition}
\begin{document}

\title{\textbf{Rogers-Ramanujan Type $q$-Exponential Operator and Stieljes-Widgert Polynomials}}
\author{Ronald Orozco L\'opez}

\newcommand{\Addresses}{{
  \bigskip
  \footnotesize

  \textit{E-mail address}, R.~Orozco: \texttt{rj.orozco@uniandes.edu.co}
  
}}

\maketitle


\begin{abstract}
In this paper, we use the Rogers-Ramanujan type $q$-exponential operator $\rr(qD_{q})$ to derive generating functions, and Mehler and Rogers formulas, for the non-normalized homogeneous Stieljes-Wigert polynomials $\s_{n}(x,y;q)$.
\end{abstract}
\noindent \emph{Keywords:} Stieljes-Wigert polynomials, Rogers-Ramanujan type $q$-exponential operator, Mehler-type formula, Rogers-type formula. \\
\noindent 2020 {\it Mathematics Subject Classification}:
Primary 05A30. Secondary 11B39; 33D15; 33D45.

\section{Introduction}

We begin with some notation and terminology for basic hypergeometric series \cite{gasper}. Let $\vert q\vert<1$ and the $q$-shifted factorial be defined by
\begin{align*}
    (a;q)_{0}&=1,\\
    (a;q)_{n}&=\prod_{k=0}^{n-1}(1-q^{k}a),\\
    (a;q)_{\infty}&=\lim_{n\rightarrow\infty}(a;q)_{n}=\prod_{k=0}^{\infty}(1-aq^{k}).
\end{align*}
The multiple $q$-shifted factorials are defined by
\begin{align*}
    (a_{1},a_{2},\ldots,a_{m};q)_{n}&=(a_{1};q)_{n}(a_{2};q)_{n}\cdots(a_{m};q)_{n},\\
    (a_{1},a_{2},\ldots,a_{m};q)_{\infty}&=(a_{1};q)_{\infty}(a_{2};q)_{\infty}\cdots(a_{m};q)_{\infty}.
\end{align*}
Some useful identities for $q$-shifted factorial:
\begin{align}
    (a;q)_{n}&=\frac{(a;q)_{\infty}}{(aq^n;q)_{\infty}},\label{eqn_iden1}\\
    (a;q)_{n+k}&=(a;q)_{n}(aq^{n};q)_{k},\label{eqn_iden2}\\
    (aq^n;q)_{k}&=\frac{(a;q)_{k}(aq^k;q)_{n}}{(a;q)_{n}},\label{eqn_iden3}
\end{align}
In our work, we will use the identities for binomial coefficients:
\begin{align*}
    \binom{n+k}{2}&=\binom{n}{2}+\binom{k}{2}+nk,\\
    \binom{n-k}{2}&=\binom{n}{2}+\binom{k}{2}+k(1-n).
\end{align*}
The $q$-binomial coefficient is defined by
\begin{equation*}
\qbinom{n}{k}_{q}=\frac{(q;q)_{n}}{(q;q)_{k}(q;q)_{n-k}}.
\end{equation*}
The ${}_r\phi_{s}$ basic hypergeometric series is define by
\begin{equation*}
    {}_r\phi_{s}\left(
    \begin{array}{c}
         a_{1},a_{2},\ldots,a_{r} \\
         b_{1},\ldots,b_{s}
    \end{array}
    ;q,z
    \right)=\sum_{n=0}^{\infty}\frac{(a_{1},a_{2},\ldots,a_{r};q)_{n}}{(q;q)_{n}(b_{1},b_{2},\ldots,b_{s};q)_{n}}\Big[(-1)^{n}q^{\binom{n}{2}}\Big]^{1+s-r}z^n.
\end{equation*}
In this paper, we will frequently use the $q$-binomial theorem:
\begin{equation}\label{eqn_qbin_the}
    {}_1\phi_{0}(a;q,z)=\frac{(az;q)_{\infty}}{(z;q)_{\infty}}=\sum_{n=0}^{\infty}\frac{(a;q)_{n}}{(q;q)_{n}}z^{n}.
\end{equation}
The $q$-exponential $\e_{q}(z)$ is defined by
\begin{equation*}
    \e_{q}(z)=\sum_{n=0}^{\infty}\frac{z^n}{(q;q)_{n}}={}_1\phi_{0}\left(\begin{array}{c}
         0\\
         - 
    \end{array};q,-z\right)=\frac{1}{(z;q)_{\infty}}.
\end{equation*}
Another $q$-analogue of the classical exponential function is
\begin{equation*}
    \E_{q}(z)=\sum_{n=0}^{\infty}q^{\binom{n}{2}}\frac{z^n}{(q;q)_{n})}={}_1\phi_{1}\left(\begin{array}{c}
         0\\
         0 
    \end{array};q,-z\right)=(-z;q)_{\infty}.
\end{equation*}
The Stieljes-Wigert polynomials are defined by
\begin{equation}\label{eqn_SWP}
    \mathrm{S}_{n}(x;q)=\frac{1}{(q;q)_{n}}{}_{1}\phi_{1}\left(\begin{array}{c}
         q^{-n}\\
         0
    \end{array};q,-q^{n+1}x\right).
\end{equation}
Some known generating functions are:
\begin{align}
    \frac{1}{(t;q)_{\infty}}{}_{0}\phi_{1}\left(\begin{array}{c}
         -\\
         0
    \end{array};q,-qtx\right)&=\sum_{n=0}^{\infty}\mathrm{S}_{n}(x;q)t^n.\label{eqn_GF1}\\
    (t;q)_{\infty}\cdot{}_{0}\phi_{2}\left(\begin{array}{c}
         -\\
         0,t
    \end{array};q,-qtx\right)&=\sum_{n=0}^{\infty}(-1)^nq^{\binom{n}{2}}\mathrm{S}_{n}(x;q)t^n.\label{eqn_GF2}\\
    \frac{(at;q)_{\infty}}{(t;q)_{\infty}}{}_{1}\phi_{2}\left(\begin{array}{c}
         a\\
         0,at
    \end{array};q,-qtx\right)&=\sum_{n=0}^{\infty}(a;q)_{n}\mathrm{S}_{n}(x;q)t^n.\label{eqn_GF3}
\end{align}
In this paper, we define the non-normalized homogeneous Stieljes-Wigert polynomial of order $n$
\begin{equation}
    \s_{n}(x,y;q)=\sum_{n=0}^{\infty}\qbinom{n}{k}_{q}q^{k^2}x^{n-k}y^k
\end{equation}
and represent it through the Rogers-Ramanujan type $q$-exponential operator \cite{orozco}
\begin{equation}
    \rr(yD_{q})=\sum_{n=0}^{\infty}q^{n^2}\frac{y^n}{(q;q)_{n}}D_{q}^n,
\end{equation}
to derive its generating functions, Mehler's and Rogers's type formulas.

\section{$q$-differential operator}

The $q$-differential operator $D_{q}$ is defined by:
\begin{equation*}
    D_{q}f(x)=\frac{f(x)-f(qx)}{x}
\end{equation*}
and the Leibniz rule for $D_{q}$
\begin{equation}\label{eqn_leibniz}
    D_{q}^{n}\{f(x)g(x)\}=\sum_{k=0}^{n}q^{k(k-n)}\qbinom{n}{k}_{q}D_{q}^{k}\{f(x)\}D_{q}^{n-k}\{g(q^{k}x)\}.
\end{equation}
Then
\begin{align}
    D_{q}^nx^{k}&=\frac{(q;q)_{k}}{(q;q)_{k-n}}x^{k-n},\label{eqn_iden4}\\
    D_{q}^n\left\{\frac{1}{(ax;q)_{\infty}}\right\}&=\frac{a^n}{(ax;q)_{\infty}},\label{eqn_iden5}\\
    D_{q}^{n}\{(ax;q)_{\infty}\}&=(-a)^{n}q^{\binom{n}{2}}(aq^nx;q)_{\infty}.\label{eqn_iden6}
\end{align}
From the Leibniz formula Eq.(\ref{eqn_leibniz}) and the Eqs.(\ref{eqn_iden1}), (\ref{eqn_iden5}), and (\ref{eqn_iden6}), then
    \begin{align}
        D_{q}^{n}\{(ax,bx;q)_{\infty}\}&=q^{\binom{n}{2}}(ax,bq^nx;q)_{\infty}\sum_{k=0}^{n}\qbinom{n}{k}_{q}q^{k(k-n)}\frac{a^kb^{n-k}}{(ax;q)_{k}},\label{eqn_iden7}\\
        D_{q}^{n}\left\{\frac{(ax;q)_{\infty}}{(bx;q)_{\infty}}\right\}&=\frac{(ax;q)_{\infty}}{(bx;q)_{\infty}}\sum_{k=0}^{n}\qbinom{n}{k}_{q}q^{\binom{k}{2}}(-a)^kb^{n-k}\frac{(bx;q)_{k}}{(ax;q)_{k}},\label{eqn_iden8}\\
        D_{q}^{n}\left\{\frac{1}{(ax,bx;q)_{\infty}}\right\}
        &=\frac{1}{(ax,bx;q)_{\infty}}\sum_{k=0}^{n}\qbinom{n}{k}_{q}a^kb^{n-k}(bx;q)_{k}.\label{eqn_iden9}
    \end{align}

\section{The Ramanujan $q$-exponential function}

\begin{definition}
The Ramanujan type $q$-exponential function $\RR_{q}(z)$ is defined as
\begin{equation}
    \RR_{q}(z)=\sum_{n=0}^{\infty}q^{n^2}\frac{z^n}{(q;q)_{n}}.
\end{equation}
\end{definition}
The function $\RR_{q}(z)$ satisfies the following equation in differences
\begin{equation}
    \RR_{q}(z)-\RR_{q}(qz)=(1-q)z\RR_{q}(q^2z).
\end{equation}
The $n$-th $q$-derivative of $\RR_{q}(z)$ is
\begin{equation}
    D_{q}^{n}\{\RR_{q}(az)\}=a^{n}q^{n^2}\RR_{q}(aq^{2n}z).
\end{equation}
Some special values of the function $\RR_{q}(z)$ are:
\begin{equation}
    \RR_{q}(1)=\frac{1}{(q,q^4;q^5)_{\infty}}
\end{equation}
and
\begin{equation}
    \RR_{q}(q)=\frac{1}{(q^2,q^3;q^5)_{\infty}}.
\end{equation}
From Garrett \cite{garrett}
\begin{align}\label{eqn_eqn_garrett}
    \RR_{q}(q^{k})=\sum_{n=0}^{\infty}\frac{q^{n^2+kn}}{(q;q)_{n}}&=\RR_{q}(1)q^{-\binom{k}{2}}a_{k}(q)-\RR_{q}(q)q^{-\binom{k}{2}}b_{k}(q),
\end{align}
where
\begin{equation*}
    a_{k}(q)=\sum_{i}(-1)^iq^{i(5i-3)/2}\qbinom{k-1}{\lfloor\frac{k+1-5i}{2}\rfloor}_{q}
\end{equation*}
and
\begin{equation*}
    b_{k}(q)=\sum_{i}(-1)^iq^{i(5i+1)/2}\qbinom{k-1}{\lfloor\frac{k-1-5i}{2}\rfloor}_{q},
\end{equation*}
where the initial values are defined as $a_{0}=1$ and $b_{0}=0$.

\section{Generating functions of Stieljes-Widgert polynomials}

\begin{definition}\label{def_translation}
Define the $q$-exponential operator type Rogers-Ramanujan $\RR(yD_{q})$ by letting 
\begin{equation*}
    \rr(yD_{q})=\sum_{n=0}^{\infty}q^{n^2}\frac{(yD_{q})^{n}}{(q;q)_{n}}. 
\end{equation*}
\end{definition}

\begin{definition}
We define the bivariate Stieljes-Wigert polynomials as
    \begin{equation*}
        \s_{n}(x,y;q)=\sum_{k=0}^{n}\qbinom{n}{k}_{q}q^{k^2}x^{n-k}y^k.
    \end{equation*}
\end{definition}

\begin{theorem}
    \begin{equation*}
        \rr(yD_{q})\{x^n\}=\s_{n}(x,y;q).
    \end{equation*}
\end{theorem}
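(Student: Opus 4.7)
The proof is essentially a one-line verification, so my plan is to apply $\rr(yD_q)$ to $x^n$ term-by-term and match coefficients against the definition of $\s_n(x,y;q)$.

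First, I would expand $\rr(yD_q)$ according to Definition~\ref{def_translation}. Since $y$ is a parameter independent of the variable $x$ on which $D_q$ acts, $y$ commutes with $D_q$, so $(yD_q)^k = y^k D_q^k$. This gives
\begin{equation*}
\rr(yD_q)\{x^n\} = \sum_{k=0}^{\infty} q^{k^2}\frac{y^k}{(q;q)_k}\, D_q^k\{x^n\}.
\end{equation*}

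Next, I would invoke identity (\ref{eqn_iden4}) to evaluate $D_q^k\{x^n\}$. For $0\le k\le n$ this yields $\frac{(q;q)_n}{(q;q)_{n-k}}x^{n-k}$, while for $k>n$ the result is zero (the repeated $q$-derivative kills the monomial). Hence the series truncates at $k=n$, giving
\begin{equation*}
\rr(yD_q)\{x^n\} = \sum_{k=0}^{n} q^{k^2}\frac{(q;q)_n}{(q;q)_k(q;q)_{n-k}}\, x^{n-k}y^k = \sum_{k=0}^{n}\qbinom{n}{k}_q q^{k^2} x^{n-k}y^k,
\end{equation*}
which is precisely $\s_n(x,y;q)$ by definition.

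There is no real obstacle here; the only subtle point is justifying that the infinite sum defining $\rr(yD_q)$ terminates when applied to a polynomial of degree $n$, which follows immediately from $D_q^k\{x^n\}=0$ for $k>n$. All other steps are pure bookkeeping.
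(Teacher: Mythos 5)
Your proposal is correct and follows exactly the same route as the paper's (very terse) proof: expand $\rr(yD_{q})$, apply Eq.~(\ref{eqn_iden4}) to evaluate $D_{q}^{k}\{x^{n}\}$, note the truncation at $k=n$, and recognize the $q$-binomial coefficient. You simply spell out the bookkeeping (including why the series terminates) that the paper leaves implicit.
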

\begin{proof}
    \begin{align*}
        \rr(yD_{q})\{x^n\}&=\sum_{k=0}^{\infty}q^{k^2}\frac{y^k}{(q;q)_{k}}D_{q}^k\{x^n\}=\sum_{k=0}^{n}\qbinom{n}{k}_{q}q^{k^2}y^kx^{n-k}.
    \end{align*}
\end{proof}

\begin{theorem}
    \begin{equation*}
        \rr(yD_{q})\left\{\frac{1}{(ax;q)_{\infty}}\right\}=\frac{\RR_{q}(ay)}{(ax;q)_{\infty}}.
    \end{equation*}
\end{theorem}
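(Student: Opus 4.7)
The plan is to expand the operator $\rr(yD_q)$ according to its defining series and apply it termwise to the function $1/(ax;q)_\infty$. Since each term in the expansion is a constant multiple of $D_q^n$ acting on this function, the key ingredient will be the closed-form evaluation of those $q$-derivatives, which is already available from identity~(\ref{eqn_iden5}): $D_q^n\{1/(ax;q)_\infty\} = a^n/(ax;q)_\infty$.

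Concretely, I would first write
\begin{equation*}
    \rr(yD_q)\left\{\frac{1}{(ax;q)_\infty}\right\} = \sum_{n=0}^{\infty} q^{n^2}\frac{y^n}{(q;q)_n}\, D_q^n\left\{\frac{1}{(ax;q)_\infty}\right\},
\end{equation*}
and then substitute the closed form from~(\ref{eqn_iden5}) to obtain
\begin{equation*}
    \rr(yD_q)\left\{\frac{1}{(ax;q)_\infty}\right\} = \frac{1}{(ax;q)_\infty}\sum_{n=0}^{\infty} q^{n^2}\frac{(ay)^n}{(q;q)_n}.
\end{equation*}
The inner sum is, by definition, $\RR_q(ay)$, which yields the claimed identity.

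There is essentially no combinatorial obstacle here; the only mild subtlety is that one is implicitly interchanging the infinite sum defining $\rr(yD_q)$ with the expansion of $1/(ax;q)_\infty$, which is justified as a formal power series identity (or analytically for $|x|,|y|$ in appropriate regions since $|q|<1$ makes everything absolutely convergent). So the proof is really a one-line computation, and the main content is the observation that $1/(ax;q)_\infty$ is an eigenfunction-like object for $D_q$, with eigenvalue $a$, which makes the operator $\rr(yD_q)$ act as the scalar series $\RR_q(ay)$.
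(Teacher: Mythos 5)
Your proof is correct and follows essentially the same route as the paper: expand $\rr(yD_{q})$ termwise and apply the eigenfunction identity $D_{q}^{n}\{1/(ax;q)_{\infty}\}=a^{n}/(ax;q)_{\infty}$ of Eq.~(\ref{eqn_iden5}), then recognize the resulting series as $\RR_{q}(ay)$. Nothing further is needed.
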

\begin{proof}
    \begin{align*}
        \rr(yD_{q})\left\{\frac{1}{(ax;q)_{\infty}}\right\}&=\sum_{k=0}^{\infty}q^{k^2}\frac{y^k}{(q;q)_{k}}D_{q}^k\left\{\frac{1}{(ax;q)_{\infty}}\right\}\\
        &=\sum_{k=0}^{\infty}q^{k^2}\frac{y^k}{(q;q)_{k}}\frac{a^k}{(ax;q)_{\infty}}\\
        &=\frac{\RR_{q}(ay)}{(ax;q)_{\infty}}.
    \end{align*}
\end{proof}

\begin{theorem}
    \begin{equation*}
        \sum_{n=0}^{\infty}\s_{n}(x,y;q)\frac{z^n}{(q;q)_{n}}=\frac{\RR_{q}(zy)}{(zx;q)_{\infty}}.
    \end{equation*}
\end{theorem}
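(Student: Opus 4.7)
The plan is to read the identity as the statement that the two operator identities already proved in this section agree when applied to the generating function of the monomials $x^n$. Specifically, the strategy is to start from the $q$-exponential expansion
\begin{equation*}
\frac{1}{(zx;q)_{\infty}}=\sum_{n=0}^{\infty}\frac{(zx)^{n}}{(q;q)_{n}}=\sum_{n=0}^{\infty}\frac{z^{n}}{(q;q)_{n}}\,x^{n},
\end{equation*}
which is the $q$-binomial theorem \eqref{eqn_qbin_the} in its $\e_{q}$ form, and then apply the operator $\rr(yD_{q})$ to both sides, with $D_{q}$ acting in the variable $x$.

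On the left-hand side, the second theorem of this section (with $a=z$) gives
\begin{equation*}
\rr(yD_{q})\left\{\frac{1}{(zx;q)_{\infty}}\right\}=\frac{\RR_{q}(zy)}{(zx;q)_{\infty}},
\end{equation*}
which is the desired right-hand side. On the right-hand side, assuming we may interchange $\rr(yD_{q})$ with the infinite sum, the first theorem of this section gives
\begin{equation*}
\rr(yD_{q})\left\{\sum_{n=0}^{\infty}\frac{z^{n}}{(q;q)_{n}}x^{n}\right\}=\sum_{n=0}^{\infty}\frac{z^{n}}{(q;q)_{n}}\,\rr(yD_{q})\{x^{n}\}=\sum_{n=0}^{\infty}\frac{z^{n}}{(q;q)_{n}}\,\s_{n}(x,y;q).
\end{equation*}
Equating the two evaluations yields the claim.

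The only genuine issue is the interchange of $\rr(yD_{q})=\sum_{k\ge 0}q^{k^{2}}y^{k}D_{q}^{k}/(q;q)_{k}$ with the outer sum in $n$. This is a routine convergence check (for $|q|<1$ and $|zx|<1$ both double series are absolutely convergent, since the $q^{k^{2}}$ factor dominates and $D_{q}^{k}\{x^{n}\}$ vanishes for $k>n$), so I would handle it in a single sentence rather than spell it out in detail. Everything else is immediate from the two preceding theorems, so I do not expect any real obstacle.
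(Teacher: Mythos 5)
Your proof is correct and is essentially the paper's own argument: the paper likewise writes $\s_{n}(x,y;q)z^{n}=\rr(yD_{q})\{(zx)^{n}\}$, interchanges the operator with the sum to get $\rr(yD_{q})\{1/(zx;q)_{\infty}\}$, and then invokes the preceding theorem to obtain $\RR_{q}(zy)/(zx;q)_{\infty}$. Your extra sentence justifying the interchange is a harmless (and welcome) addition that the paper omits.
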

\begin{proof}
    \begin{align*}
        \sum_{n=0}^{\infty}\s_{n}(x,y;q)\frac{z^n}{(q;q)_{n}}&=\sum_{n=0}^{\infty}\frac{\rr(yD_{q})\{(zx)^n\}}{(q;q)_{n}}\\
        &=\rr(yD_{q})\left\{\sum_{n=0}^{\infty}\frac{(zx)^n}{(q;q)_{n}}\right\}\\
        &=\rr(yD_{q})\left\{\frac{1}{(zx;q)_{\infty}}\right\}\\
        &=\frac{\RR_{q}(zy)}{(zx;q)_{\infty}}.
    \end{align*}
\end{proof}

\begin{theorem}
    \begin{equation*}
        \rr(yD_{q})\left\{(ax;q)_{\infty}\right\}=(ax;q)_{\infty}\cdot{}_{0}\phi_{2}\left(
    \begin{array}{c}
         -\\
         ax,0
    \end{array}
    ;q,qay
    \right).
    \end{equation*}
\end{theorem}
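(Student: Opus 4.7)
The plan is to expand $\rr(yD_{q})\{(ax;q)_{\infty}\}$ term-by-term from the definition and recognize the resulting series as the stated ${}_{0}\phi_{2}$.

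First I would apply Eq.(\ref{eqn_iden6}) to obtain $D_{q}^{n}\{(ax;q)_{\infty}\}=(-a)^{n}q^{\binom{n}{2}}(aq^{n}x;q)_{\infty}$, so that
\[
\rr(yD_{q})\{(ax;q)_{\infty}\}=\sum_{n=0}^{\infty}q^{n^{2}+\binom{n}{2}}\frac{(-ay)^{n}}{(q;q)_{n}}(aq^{n}x;q)_{\infty}.
\]
Then I would use Eq.(\ref{eqn_iden1}) to write $(aq^{n}x;q)_{\infty}=(ax;q)_{\infty}/(ax;q)_{n}$ and factor the common infinite product outside the sum, obtaining
\[
\rr(yD_{q})\{(ax;q)_{\infty}\}=(ax;q)_{\infty}\sum_{n=0}^{\infty}\frac{(-1)^{n}(ay)^{n}q^{n^{2}+\binom{n}{2}}}{(q;q)_{n}(ax;q)_{n}}.
\]

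The last step is to match this sum against the ${}_{0}\phi_{2}$ defined in the introduction. With $r=0$ and $s=2$, the built-in prefactor is $[(-1)^{n}q^{\binom{n}{2}}]^{3}=(-1)^{n}q^{3\binom{n}{2}}$; choosing lower parameters $ax, 0$ (so that $(0;q)_{n}=1$) and argument $z=qay$, the $n$-th term of the basic hypergeometric series equals $(-1)^{n}(ay)^{n}q^{3\binom{n}{2}+n}/[(q;q)_{n}(ax;q)_{n}]$. The identification then reduces to the elementary identity $n^{2}+\binom{n}{2}=3\binom{n}{2}+n$, equivalent to $n^{2}=n(n-1)+n$.

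There is no genuine obstacle; every step is mechanical. The one place to exercise care is the convention embedded in the ${}_{r}\phi_{s}$ definition, where the factor $q^{\binom{n}{2}}$ enters raised to the power $1+s-r$. Once that exponent is tracked correctly, the powers of $q$ align exactly and the claimed identification follows.
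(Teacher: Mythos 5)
Your proposal is correct and follows essentially the same route as the paper: apply Eq.~(\ref{eqn_iden6}) termwise, pull out $(ax;q)_{\infty}$ via Eq.~(\ref{eqn_iden1}), and match the resulting series with the ${}_{0}\phi_{2}$ using the exponent identity $n^{2}+\binom{n}{2}=3\binom{n}{2}+n$. Your explicit tracking of the $[(-1)^{n}q^{\binom{n}{2}}]^{1+s-r}$ factor is just a more careful writing-out of the paper's final step.
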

\begin{proof}
    \begin{align*}
        \rr(yD_{q})\left\{(ax;q)_{\infty}\right\}&=\sum_{n=0}^{\infty}q^{n^2}\frac{y^n}{(q;q)_{n}}D_{q}^{n}\left\{(ax;q)_{\infty}\right\}\\
        &=(ax;q)_{\infty}\sum_{n=0}^{\infty}q^{n^2}\frac{y^n}{(q;q)_{n}}\frac{(-a)^nq^{\binom{n}{2}}}{(ax;q)_{n}}\\
        &=(ax;q)_{\infty}\cdot{}_{0}\phi_{2}\left(
    \begin{array}{c}
         -\\
         ax,0
    \end{array}
    ;q,qay
    \right).
    \end{align*}
\end{proof}

\begin{theorem}
    \begin{equation*}
        \sum_{n=0}^{\infty}(-1)^nq^{\binom{n}{2}}\s_{n}(x,y;q)\frac{z^n}{(q;q)_{n}}=(zx;q)_{\infty}\cdot{}_{0}\phi_{2}\left(
    \begin{array}{c}
         -\\
         zx,0
    \end{array}
    ;q,qzy
    \right).
    \end{equation*}
\end{theorem}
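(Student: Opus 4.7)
The plan is to imitate the proof of the immediately preceding generating function identity, but with the sign-weighting factor $(-1)^n q^{\binom{n}{2}}$ inserted. This has the effect of converting the reciprocal $q$-exponential $\e_q(zx)=1/(zx;q)_\infty$ that appeared there into the other $q$-exponential $\E_q(-zx)=(zx;q)_\infty$, after which the theorem on $\rr(yD_q)\{(ax;q)_\infty\}$ proved just above supplies the right-hand side.

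Concretely, first I would use the operator representation $\s_n(x,y;q)=\rr(yD_q)\{x^n\}$ from the first theorem of this section to rewrite the left-hand side as
\[
\sum_{n=0}^{\infty}(-1)^n q^{\binom{n}{2}}\,\rr(yD_q)\{x^n\}\,\frac{z^n}{(q;q)_n}.
\]
Because $\rr(yD_q)$ is a linear operator acting termwise on powers of $x$, I can pull it outside the summation (this is the only formal point needing care, and it is the same interchange that was used in the previous theorem) to obtain $\rr(yD_q)\Bigl\{\sum_{n\ge 0}(-1)^n q^{\binom{n}{2}}(zx)^n/(q;q)_n\Bigr\}$.

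Next I would identify the inner series. By the definition of $\E_q$ recalled in the introduction, $\E_q(w)=\sum_{n\ge 0} q^{\binom{n}{2}} w^n/(q;q)_n=(-w;q)_\infty$, so with $w=-zx$ the inner sum equals $\E_q(-zx)=(zx;q)_\infty$. The expression has therefore collapsed to $\rr(yD_q)\{(zx;q)_\infty\}$.

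Finally, I apply the previous theorem, which evaluates $\rr(yD_q)\{(ax;q)_\infty\}$ as $(ax;q)_\infty \cdot {}_0\phi_2(-;ax,0;q,qay)$, with $a$ specialized to $z$. This yields exactly $(zx;q)_\infty \cdot {}_0\phi_2(-;zx,0;q,qzy)$, matching the right-hand side. There is no serious obstacle; the only subtlety is the sum/operator interchange, which is standard in the $q$-exponential operator calculus and justified by termwise action on $x$-monomials.
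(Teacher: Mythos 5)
Your proposal is correct and follows essentially the same route as the paper: rewrite $\s_{n}(x,y;q)z^{n}$ via the operator representation, interchange sum and operator, recognize the inner series as $(zx;q)_{\infty}$, and apply the preceding theorem on $\rr(yD_{q})\{(ax;q)_{\infty}\}$ with $a=z$. No gaps.
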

\begin{proof}
    \begin{align*}
        \sum_{n=0}^{\infty}(-1)^nq^{\binom{n}{2}}\s_{n}(x,y;q)\frac{z^n}{(q;q)_{n}}&=\sum_{n=0}^{\infty}(-1)^nq^{\binom{n}{2}}\frac{\rr(yD_{q})\{(zx)^n\}}{(q;q)_{n}}\\
        &=\rr(yD_{q})\left\{\sum_{n=0}^{\infty}(-1)^nq^{\binom{n}{2}}\frac{(zx)^n}{(q;q)_{n}}\right\}\\
        &=\rr(yD_{q})\left\{(zx;q)_{\infty}\right\}\\
        &=(zx;q)_{\infty}\cdot{}_{0}\phi_{2}\left(
    \begin{array}{c}
         -\\
         zx,0
    \end{array}
    ;q,qzy
    \right).
    \end{align*}
\end{proof}

\begin{theorem}
    \begin{equation*}
        \rr(yD_{q})\left\{\frac{(az;q)_{\infty}}{(z;q)_{\infty}}\right\}=\frac{(az;q)_{\infty}}{(z;q)_{\infty}}{}_{1}\phi_{2}\left(
    \begin{array}{c}
         a\\
         az,0
    \end{array}
    ;q,qy
    \right).
    \end{equation*}
\end{theorem}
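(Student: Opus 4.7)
The plan is to expand the quotient $(az;q)_{\infty}/(z;q)_{\infty}$ as a $q$-binomial series first, apply $\rr(yD_{q})$ termwise using the first theorem of this section, and then reorganize the resulting double sum to recognize the claimed ${}_1\phi_2$.

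By the $q$-binomial theorem \eqref{eqn_qbin_the}, $(az;q)_{\infty}/(z;q)_{\infty}=\sum_{k\geq 0}(a;q)_{k}z^{k}/(q;q)_{k}$. Since $\rr(yD_{q})\{z^{k}\}=\s_{k}(z,y;q)$ by the first theorem of this section, linearity gives
\[
\rr(yD_{q})\!\left\{\frac{(az;q)_{\infty}}{(z;q)_{\infty}}\right\}
=\sum_{k=0}^{\infty}\frac{(a;q)_{k}}{(q;q)_{k}}\s_{k}(z,y;q).
\]
I would then expand $\s_{k}(z,y;q)$ via its definition, swap the two summations, and set $k=j+m$. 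The factor $(q;q)_{k}^{-1}\qbinom{k}{j}_{q}$ collapses to $(q;q)_{j}^{-1}(q;q)_{m}^{-1}$, and identity \eqref{eqn_iden2} yields $(a;q)_{j+m}=(a;q)_{j}(aq^{j};q)_{m}$. After these rearrangements the double sum factors as
\[
\sum_{j=0}^{\infty}\frac{(a;q)_{j}q^{j^{2}}y^{j}}{(q;q)_{j}}\sum_{m=0}^{\infty}\frac{(aq^{j};q)_{m}}{(q;q)_{m}}z^{m}.
\]

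A second application of the $q$-binomial theorem identifies the inner sum as $(aq^{j}z;q)_{\infty}/(z;q)_{\infty}$, and Eq.\eqref{eqn_iden1} rewrites $(aq^{j}z;q)_{\infty}=(az;q)_{\infty}/(az;q)_{j}$, so the prefactor $(az;q)_{\infty}/(z;q)_{\infty}$ can be pulled out. What remains is the series $\sum_{j\geq 0}(a;q)_{j}q^{j^{2}}y^{j}/\bigl((q;q)_{j}(az;q)_{j}\bigr)$, which matches the target ${}_1\phi_2\bigl(a;az,0;q,qy\bigr)$ after noting $(0;q)_{j}=1$ and, for $r=1$, $s=2$, $\bigl[(-1)^{j}q^{\binom{j}{2}}\bigr]^{1+s-r}(qy)^{j}=q^{j(j-1)}q^{j}y^{j}=q^{j^{2}}y^{j}$.

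The main obstacle is not any single hard identity but the bookkeeping: the swap of summation order and the re-indexing $k=j+m$ must be executed carefully, and one must resist the tempting but messier route of applying \eqref{eqn_iden8} directly, which produces a double sum involving $(z;q)_{k}/(az;q)_{k}$ and $\RR_{q}(q^{2k}y)$ that does not telescope as cleanly. Expanding the input first and only then acting with $\rr(yD_{q})$ is what makes the inner sum collapse by a second use of \eqref{eqn_qbin_the}.
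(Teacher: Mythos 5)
Your proof is correct, but it is organized differently from the paper's. The paper keeps the closed form intact and applies the operator coefficient-by-coefficient in the derivative order $n$: it writes $(az;q)_{\infty}/(z;q)_{\infty}={}_{1}\phi_{0}(a;-;q,z)$, uses the derivative formula $D_{q}^{n}\{{}_{1}\phi_{0}(a;-;q,z)\}=(a;q)_{n}\,{}_{1}\phi_{0}(aq^{n};-;q,z)$, converts back to the product $(aq^{n}z;q)_{\infty}/(z;q)_{\infty}$ by the $q$-binomial theorem, and pulls out the prefactor via $(aq^{n}z;q)_{\infty}=(az;q)_{\infty}/(az;q)_{n}$, landing directly on $\sum_{n}q^{n^{2}}(a;q)_{n}y^{n}/\bigl((q;q)_{n}(az;q)_{n}\bigr)$. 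You instead expand the input first, act on monomials through $\rr(yD_{q})\{z^{k}\}=\s_{k}(z,y;q)$, and then do the double-sum bookkeeping (swap, set $k=j+m$, use \eqref{eqn_iden2}, resum the inner series by \eqref{eqn_qbin_the}, extract the prefactor by \eqref{eqn_iden1}). The two arguments are evaluations of the same double series in the two possible orders: your inner $m$-sum is exactly the paper's step $D_{q}^{n}\{{}_{1}\phi_{0}\}$ in disguise. What your route buys is that it only needs the elementary representation theorem for $\s_{n}$ plus series manipulations, avoiding the $n$-th $q$-derivative formula for ${}_{1}\phi_{0}$ (or \eqref{eqn_iden8}); what the paper's route buys is that it never leaves closed form, so no interchange or reindexing of a double sum is required. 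Your final identification of the ${}_{1}\phi_{2}$ (the factor $\bigl[(-1)^{j}q^{\binom{j}{2}}\bigr]^{2}(qy)^{j}=q^{j^{2}}y^{j}$) is also handled correctly, and your side remark is apt: the \eqref{eqn_iden8} route with $\RR_{q}(q^{2k}y)$ is precisely what the paper resorts to in the later theorem where it needs the $by=1$ specialization.
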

\begin{proof}
From $q$-binomial theorem,
    \begin{align*}
        \rr(yD_{q})\left\{\frac{(az;q)_{\infty}}{(z;q)_{\infty}}\right\}
        &=\sum_{n=0}^{\infty}q^{n^2}\frac{y^n}{(q;q)_{n}}D_{q}^n\left\{\frac{(az;q)_{\infty}}{(z;q)_{\infty}}\right\}\\
        &=\sum_{n=0}^{\infty}q^{n^2}\frac{y^n}{(q;q)_{n}}D_{q}^n\left\{{}_{1}\phi_{0}\left(
    \begin{array}{c}
         a\\
         -
    \end{array}
    ;q,z
    \right)\right\}\\
    &=\sum_{n=0}^{\infty}q^{n^2}\frac{y^n}{(q;q)_{n}}(a;q)_{n}\cdot{}_{1}\phi_{0}\left(
    \begin{array}{c}
         aq^n\\
         -
    \end{array}
    ;q,z
    \right)\\
    &=\sum_{n=0}^{\infty}q^{n^2}\frac{(a;q)_{n}}{(q;q)_{n}}y^{n}\frac{(aq^nz;q)_{\infty}}{(z;q)_{\infty}}\\
    &=\frac{(az;q)_{\infty}}{(z;q)_{\infty}}\sum_{n=0}^{\infty}q^{n^2}\frac{(a;q)_{n}}{(q;q)_{n}(az;q)_{n}}y^{n}\\
    &=\frac{(az;q)_{\infty}}{(z;q)_{\infty}}{}_{1}\phi_{2}\left(
    \begin{array}{c}
         a\\
         az,0
    \end{array}
    ;q,qy
    \right).
    \end{align*}
\end{proof}

\begin{theorem}
If $by=1$, then
    \begin{align*}
        &\rr(yD_{q})\left\{\frac{(ax;q)_{\infty}}{(xb;q)_{\infty}}\right\}\\
        &\hspace{1cm}=\frac{(ax;q)_{\infty}}{(bx;q)_{\infty}(q,q^4;q^5)_{\infty}}\sum_{k=0}^{\infty}q^{-\binom{k}{2}}\frac{(bx;q)_{k}a_{2k}(q)}{(ax;q)_{k}(q;q)_{k}}(-ay)^k\\
    &\hspace{2cm}-\frac{(ax;q)_{\infty}}{(bx;q)_{\infty}(q^2,q^3;q^5)_{\infty}}\sum_{k=0}^{\infty}q^{-\binom{k}{2}}\frac{(bx;q)_{k}b_{2k}(q)}{(ax;q)_{k}(q;q)_{k}}(-ay)^k.
    \end{align*}
\end{theorem}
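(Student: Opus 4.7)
The plan is to apply $\rr(yD_q)$ termwise to the quotient $(ax;q)_\infty/(bx;q)_\infty$ using the closed form for $D_q^n$ already established in Eq.~(\ref{eqn_iden8}), and then reduce the resulting double sum, via the hypothesis $by=1$, to an expression involving $\RR_q(q^{2k})$, which is precisely what Garrett's identity~(\ref{eqn_eqn_garrett}) can evaluate.

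First, I would substitute Eq.~(\ref{eqn_iden8}) into the definition of $\rr(yD_q)$:
\begin{equation*}
\rr(yD_{q})\!\left\{\frac{(ax;q)_\infty}{(bx;q)_\infty}\right\}
=\frac{(ax;q)_\infty}{(bx;q)_\infty}\sum_{n=0}^{\infty}q^{n^2}\frac{y^n}{(q;q)_n}\sum_{k=0}^{n}\qbinom{n}{k}_q q^{\binom{k}{2}}(-a)^k b^{n-k}\frac{(bx;q)_k}{(ax;q)_k}.
\end{equation*}
Next I would swap the order of summation, pulling out all $k$-dependent factors that do not depend on $n$, and shift the index $n\mapsto k+m$. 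Expanding the $q$-binomial cancels $(q;q)_{k+m}$ and leaves
\begin{equation*}
\sum_{n\ge k}q^{n^2}\frac{y^{n}}{(q;q)_n}\qbinom{n}{k}_q b^{n-k}
=\frac{y^k q^{k^2}}{(q;q)_k}\sum_{m=0}^{\infty}\frac{q^{m^2+2km}}{(q;q)_m}(by)^{m}.
\end{equation*}

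This is where the hypothesis $by=1$ enters decisively: with $by=1$ the inner sum becomes $\sum_m q^{m^2+2km}/(q;q)_m=\RR_q(q^{2k})$. Substituting Garrett's formula~(\ref{eqn_eqn_garrett}) together with the special values $\RR_q(1)=1/(q,q^4;q^5)_\infty$ and $\RR_q(q)=1/(q^2,q^3;q^5)_\infty$ splits the $k$-sum into the two pieces appearing in the stated identity.

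The one genuinely error-prone step, and the main obstacle, is bookkeeping of the $q$-exponents: one must combine $q^{\binom{k}{2}}$ (from Eq.~(\ref{eqn_iden8})), $q^{k^2}$ (from factoring out of the Ramanujan operator), and $q^{-\binom{2k}{2}}=q^{k-2k^2}$ (from Garrett's formula) and check that the total exponent collapses to $-\binom{k}{2}$, which is what the right-hand side asks for. Once that arithmetic is verified, the two resulting sums match the claimed expression term by term, completing the proof.
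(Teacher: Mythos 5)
Your proposal is correct and follows essentially the same route as the paper: expand $D_q^n$ of the quotient via Eq.~(\ref{eqn_iden8}) (the paper re-derives it from the Leibniz rule inside the proof), interchange the sums to isolate $\RR_q(q^{2k}by)$, set $by=1$, and invoke Garrett's identity~(\ref{eqn_eqn_garrett}) with the special values of $\RR_q(1)$ and $\RR_q(q)$. The exponent bookkeeping you flag does indeed close, since $\binom{k}{2}+k^2-\binom{2k}{2}=-\binom{k}{2}$, matching the paper's intermediate exponent $q^{(3k^2-k)/2}$.
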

\begin{proof}
    \begin{align*}
        &\rr(yD_{q})\left\{\frac{(ax;q)_{\infty}}{(bx;q)_{\infty}}\right\}
        =\sum_{n=0}^{\infty}q^{n^2}\frac{y^n}{(q;q)_{n}}D_{q}^n\left\{\frac{(ax;q)_{\infty}}{(bx;q)_{\infty}}\right\}\\
        &=\sum_{n=0}^{\infty}q^{n^2}\frac{y^n}{(q;q)_{n}}\sum_{k=0}^{n}\qbinom{n}{k}_{q}q^{k(k-n)}D_{q}^{k}\{(ax;q)_{\infty}\}D_{q}^{n-k}\left\{\frac{1}{(bq^{k}x;q)_{\infty}}\right\}\\
        &=\frac{(ax;q)_{\infty}}{(bx;q)_{\infty}}\sum_{n=0}^{\infty}q^{n^2}\frac{y^n}{(q;q)_{n}}\sum_{k=0}^{n}\qbinom{n}{k}_{q}q^{\binom{k}{2}}\frac{(bx;q)_{k}}{(ax;q)_{k}}(-a)^kb^{n-k}\\
        &=\frac{(ax;q)_{\infty}}{(bx;q)_{\infty}}\sum_{k=0}^{\infty}q^{\binom{k}{2}}\frac{q^{k^2}(bx;q)_{k}}{(ax;q)_{k}(q;q)_{k}}(-ay)^k\sum_{n=0}^{\infty}q^{n^2}\frac{(q^{2k}by)^{n}}{(q;q)_{n}}\\
        &=\frac{(ax;q)_{\infty}}{(bx;q)_{\infty}}\sum_{k=0}^{\infty}q^{(3k^2-k)/2}\frac{(bx;q)_{k}}{(ax;q)_{k}(q;q)_{k}}(-ay)^k\RR_{q}(q^{2k}by).
    \end{align*}
If $by=1$, then from Eq(\ref{eqn_eqn_garrett})    
\begin{align*}
    &\rr(yD_{q})\left\{\frac{(ax;q)_{\infty}}{(bx;q)_{\infty}}\right\}\\
    &\hspace{1cm}=\frac{(ax;q)_{\infty}}{(bx;q)_{\infty}}\sum_{k=0}^{\infty}q^{(3k^2-k)/2}\frac{(bx;q)_{k}}{(ax;q)_{k}(q;q)_{k}}(-ay)^k\\
    &\hspace{4cm}\times(\RR_{q}(1)q^{-\binom{2k}{2}}a_{2k}(q)-\RR_{q}(q)q^{-\binom{2k}{2}}b_{2k}(q))\\
    &\hspace{1cm}=\frac{(ax;q)_{\infty}}{(bx;q)_{\infty}(q,q^4;q^5)_{\infty}}\sum_{k=0}^{\infty}q^{-\binom{k}{2}}\frac{(bx;q)_{k}a_{2k}(q)}{(ax;q)_{k}(q;q)_{k}}(-ay)^k\\
    &\hspace{2cm}-\frac{(ax;q)_{\infty}}{(bx;q)_{\infty}(q^2,q^3;q^5)_{\infty}}\sum_{k=0}^{\infty}q^{-\binom{k}{2}}\frac{(bx;q)_{k}b_{2k}(q)}{(ax;q)_{k}(q;q)_{k}}(-ay)^k.
\end{align*}
\end{proof}

\begin{theorem} The Srivastava-Agarwal type representation \cite{Sri} of $\s_{n}(x,y;q)$ is
    \begin{equation*}
        \sum_{n=0}^{\infty}\s_{n}(x,y;q)\frac{(a;q)_{n}}{(q;q)_{n}}z^n=\frac{(azx;q)_{\infty}}{(zx;q)_{\infty}}{}_{1}\phi_{2}\left(
    \begin{array}{c}
         a\\
         azx,0
    \end{array}
    ;q,qy
    \right).
    \end{equation*}    
If $yz=1$, then
\begin{align*}
    &\sum_{n=0}^{\infty}\s_{n}(x,y;q)\frac{(a;q)_{n}}{(q;q)_{n}}z^n\\
    &\hspace{1cm}=\frac{(azx;q)_{\infty}}{(zx;q)_{\infty}(q,q^4;q^5)_{\infty}}\sum_{k=0}^{\infty}q^{-\binom{k}{2}}\frac{(zx;q)_{k}a_{2k}(q)}{(azx;q)_{k}(q;q)_{k}}(-azy)^k\\
    &\hspace{2cm}-\frac{(azx;q)_{\infty}}{(zx;q)_{\infty}(q^2,q^3;q^5)_{\infty}}\sum_{k=0}^{\infty}q^{-\binom{k}{2}}\frac{(zx;q)_{k}b_{2k}(q)}{(azx;q)_{k}(q;q)_{k}}(-azy)^k.
\end{align*}
\end{theorem}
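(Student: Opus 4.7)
The plan is to repeat the operator-interchange technique used throughout Section 4. First, apply the identity $\s_n(x,y;q) = \rr(yD_q)\{x^n\}$ from the opening theorem of the section and pull the operator $\rr(yD_q)$ out of the series in $z$ to get
$$\sum_{n=0}^{\infty}\s_{n}(x,y;q)\frac{(a;q)_{n}}{(q;q)_{n}}z^n = \rr(yD_q)\left\{\sum_{n=0}^{\infty}\frac{(a;q)_n}{(q;q)_n}(zx)^n\right\}.$$
The inner sum collapses via the $q$-binomial theorem Eq.(\ref{eqn_qbin_the}) to $(azx;q)_\infty/(zx;q)_\infty$, reducing the task to evaluating the operator on this ratio.

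For the first identity I would invoke the earlier theorem
$$\rr(yD_q)\left\{\frac{(az;q)_\infty}{(z;q)_\infty}\right\} = \frac{(az;q)_\infty}{(z;q)_\infty}\,{}_1\phi_2\left(\begin{array}{c}a\\az,0\end{array};q,qy\right)$$
under the formal replacement $z\mapsto zx$; this produces the claimed ${}_1\phi_2$ representation on the right-hand side. For the specialization $yz=1$, I would instead apply the $by=1$ companion theorem from earlier in the section with $a\mapsto az$ and $b\mapsto z$, so that the hypothesis $by=1$ becomes $zy=1$ and the factor $(-ay)^k$ is relabelled $(-azy)^k$. The two halves of its right-hand side, weighted by $\RR_q(1)=1/(q,q^4;q^5)_\infty$ and $\RR_q(q)=1/(q^2,q^3;q^5)_\infty$ through Garrett's formula Eq.(\ref{eqn_eqn_garrett}), then reproduce exactly the pair of Rogers-Ramanujan type inner sums displayed in the statement.

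The only real bookkeeping obstacle is to verify that the substitutions $z\mapsto zx$ and $(a,b)\mapsto(az,z)$ in the earlier theorems remain legitimate when $\rr(yD_q)$ differentiates in $x$ rather than in a single formal variable. Concretely, one should recheck that the $q$-binomial expansion of $(azx;q)_\infty/(zx;q)_\infty$ in powers of $zx$, combined with the power rule $D_q^n\{x^k\}=\frac{(q;q)_k}{(q;q)_{k-n}}x^{k-n}$ from Eq.(\ref{eqn_iden4}), yields the same cancellation pattern as in the single-variable form of those theorems. Once this is confirmed, no further substantive obstacle arises and both parts of the statement follow at once.
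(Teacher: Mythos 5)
Your route is the same as the paper's: write $\s_{n}(x,y;q)=\rr(yD_{q})\{x^{n}\}$, interchange the operator with the sum in $z$, collapse the inner series by the $q$-binomial theorem Eq.(\ref{eqn_qbin_the}) to $(azx;q)_{\infty}/(zx;q)_{\infty}$, and then evaluate $\rr(yD_{q})$ on this ratio, using Garrett's identity Eq.(\ref{eqn_eqn_garrett}) for the $yz=1$ case. Your treatment of the $yz=1$ branch is sound: substituting $a\mapsto az$, $b\mapsto z$ into the earlier theorem with hypothesis $by=1$ is a pure parameter substitution (the differentiation variable $x$ is untouched), its intermediate form involves $\RR_{q}(q^{2k}zy)=\RR_{q}(q^{2k})$, and the exponent bookkeeping $q^{(3k^{2}-k)/2}q^{-\binom{2k}{2}}=q^{-\binom{k}{2}}$ reproduces the two displayed sums with $(-azy)^{k}$.

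The genuine gap is exactly the step you flag as ``bookkeeping'' and then wave away: it is not legitimate to quote the theorem for $\rr(yD_{q})\{(az;q)_{\infty}/(z;q)_{\infty}\}$ under the formal replacement $z\mapsto zx$, because in that theorem the operator differentiates with respect to the variable appearing in the function, whereas here $\rr(yD_{q})$ differentiates with respect to $x$ and the function depends on $x$ only through $zx$. Since $D_{q,x}f(zx)=z\,(D_{q}f)(zx)$, every application of $D_{q}$ contributes an extra factor $z$, so the replacement $z\mapsto zx$ must be accompanied by $y\mapsto zy$; carrying out the computation (or simply setting $z=0$, where the left-hand side equals $1$) shows that the first identity should read $\sum_{n\geq 0}\s_{n}(x,y;q)\frac{(a;q)_{n}}{(q;q)_{n}}z^{n}=\frac{(azx;q)_{\infty}}{(zx;q)_{\infty}}\,{}_{1}\phi_{2}(a;azx,0;q,qzy)$, i.e.\ with argument $qzy$ rather than $qy$. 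The paper's own statement and final proof line carry the same $qy$, so you have inherited that discrepancy rather than introduced it, and the corrected argument $qzy$ is the one consistent with your (correct) $yz=1$ derivation, where $\RR_{q}(q^{2k}zy)$ appears. To complete your proof you must actually perform the verification you postponed; as written, the assertion that the substitution ``produces the claimed ${}_{1}\phi_{2}$'' is precisely the unproved step, and with the argument $qy$ it is false.
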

\begin{proof}
    \begin{align*}
        \sum_{n=0}^{\infty}\s_{n}(x,y;q)\frac{(a;q)_{n}}{(q;q)_{n}}z^n&=\sum_{n=0}^{\infty}\frac{(a;q)_{n}}{(q;q)_{n}}\rr(yD_{q})\left\{(zx)^n\right\}\\
        &=\rr(yD_{q})\left\{\sum_{n=0}^{\infty}\frac{(a;q)_{n}}{(q;q)_{n}}(zx)^n\right\}\\
        &=\rr(yD_{q})\left\{\frac{(azx;q)_{\infty}}{(zx;q)_{\infty}}\right\}\\
    &=\frac{(azx;q)_{\infty}}{(zx;q)_{\infty}}{}_{1}\phi_{2}\left(
    \begin{array}{c}
         a\\
         azx,0
    \end{array}
    ;q,qy
    \right).
    \end{align*}
If $yz=1$, then from Eq.(\ref{eqn_eqn_garrett})
\begin{align*}
    &\sum_{n=0}^{\infty}\s_{n}(x,y;q)\frac{(a;q)_{n}}{(q;q)_{n}}z^n\\
    &\hspace{1cm}=\frac{(azx;q)_{\infty}}{(zx;q)_{\infty}(q,q^4;q^5)_{\infty}}\sum_{k=0}^{\infty}q^{-\binom{k}{2}}\frac{(zx;q)_{k}a_{2k}(q)}{(azx;q)_{k}(q;q)_{k}}(-azy)^k\\
    &\hspace{2cm}-\frac{(azx;q)_{\infty}}{(zx;q)_{\infty}(q^2,q^3;q^5)_{\infty}}\sum_{k=0}^{\infty}q^{-\binom{k}{2}}\frac{(zx;q)_{k}b_{2k}(q)}{(azx;q)_{k}(q;q)_{k}}(-azy)^k.
\end{align*}
\end{proof}

\begin{theorem}
\begin{align*}
    \rr(yD_{q})\left\{\frac{1}{(ax,bx;q)_{\infty}}\right\}=\frac{1}{(ax,bx;q)_{\infty}}\sum_{i=0}^{\infty}q^{i^2}\frac{(bx;q)_{i}}{(q;q)_{i}}(ay)^i\RR_{q}(bq^{2i}y).
\end{align*}
If $by=1$, then
    \begin{align*}
        &\rr(yD_{q})\left\{\frac{1}{(ax,bx;q)_{\infty}}\right\}\\
        &\hspace{1cm}=\frac{1}{(ax,bx;q)_{\infty}(q,q^4;q^5)_{\infty}}\sum_{i=0}^{\infty}q^{-i(3i-2)/2}\frac{(bx;q)_{i}}{(q;q)_{i}}a_{2i}(q)(ay)^i\\
    &\hspace{2cm}-\frac{1}{(ax,bx;q)_{\infty}(q^2,q^3;q^5)_{\infty}}\sum_{i=0}^{\infty}q^{-i(3-2)/2}\frac{(bx;q)_{i}}{(q;q)_{i}}b_{2i}(q)(ay)^i.
    \end{align*}
\end{theorem}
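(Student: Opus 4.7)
The plan is to expand $\rr(yD_{q})$ by its series definition and apply Eq.~(\ref{eqn_iden9}) to compute the $q$-derivatives $D_{q}^{n}\{1/(ax,bx;q)_{\infty}\}$ termwise. This produces a double sum indexed by the operator exponent $n$ and the Leibniz index $k$. To expose the $\RR_{q}$ structure visible on the right-hand side of the first claim, I would reverse the order of summation by setting $n=k+j$ with $j\geq 0$, so that $\qbinom{n}{k}_{q}/(q;q)_{n}$ collapses to $1/[(q;q)_{k}(q;q)_{j}]$ and $q^{(k+j)^{2}}$ splits as $q^{k^{2}}q^{j^{2}}q^{2kj}$. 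After factoring out $q^{k^{2}}(ay)^{k}(bx;q)_{k}/(q;q)_{k}$, the inner $j$-sum becomes $\sum_{j}q^{j^{2}}(bq^{2k}y)^{j}/(q;q)_{j}=\RR_{q}(bq^{2k}y)$, establishing the first displayed identity after relabeling $k\mapsto i$.

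For the specialization $by=1$, I would substitute $\RR_{q}(bq^{2i}y)=\RR_{q}(q^{2i})$ and apply Garrett's identity (\ref{eqn_eqn_garrett}) at index $2i$, together with the evaluations $\RR_{q}(1)=1/(q,q^{4};q^{5})_{\infty}$ and $\RR_{q}(q)=1/(q^{2},q^{3};q^{5})_{\infty}$. The expression then breaks into two series, each consisting of an infinite $q$-Pochhammer prefactor multiplying a sum over $i$ containing $a_{2i}(q)$ or $b_{2i}(q)$. The last step is to combine the outer $q^{i^{2}}$ with the Garrett factor $q^{-\binom{2i}{2}}$ into the single consolidated $q$-exponent appearing in the statement.

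The main obstacle is purely computational bookkeeping of the quadratic exponents of $q$ rather than anything conceptually deep. The scheme closely parallels the earlier theorem for $\rr(yD_{q})\{(ax;q)_{\infty}/(bx;q)_{\infty}\}$ and the Srivastava-Agarwal-type theorem that immediately precedes this statement; the present case is in fact slightly cleaner because Eq.~(\ref{eqn_iden9}) carries no $q^{\binom{k}{2}}$ factor, unlike Eq.~(\ref{eqn_iden8}), so one fewer triangular-number exponent needs to be merged. Absolute convergence of the rearranged double series follows for $|q|<1$ and $x,y$ in a suitable neighbourhood of the origin, so the interchange of summation is routine.
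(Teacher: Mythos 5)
Your proposal is correct and follows essentially the same route as the paper's own proof: expand $\rr(yD_{q})$, apply Eq.~(\ref{eqn_iden9}), reindex the double sum (your $n=k+j$ is the paper's interchange of summation followed by the shift $k\mapsto k+i$) so the inner sum becomes $\RR_{q}(bq^{2i}y)$, and then invoke Garrett's identity (\ref{eqn_eqn_garrett}) at index $2i$ when $by=1$. One caution about your final step: combining the exponents gives $q^{i^{2}}q^{-\binom{2i}{2}}=q^{i-i^{2}}=q^{-2\binom{i}{2}}$, which is not the exponent $q^{-i(3i-2)/2}$ printed in the theorem (nor the $q^{-i(3-2)/2}$ of the second sum, evidently a misprint), so you should carry out that consolidation explicitly rather than asserting it matches the displayed statement --- the discrepancy is in the paper's statement itself, not in your method.
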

\begin{proof}
    \begin{align*}
        \rr(yD_{q})\left\{\frac{1}{(ax,bx;q)_{\infty}}\right\}&=\sum_{k=0}^{\infty}q^{k^2}\frac{y^k}{(q;q)_{k}}D_{q}^k\left\{\frac{1}{(ax,bx;q)_{\infty}}\right\}\\
        &=\frac{1}{(ax,bx;q)_{\infty}}\sum_{k=0}^{\infty}q^{k^2}\frac{y^k}{(q;q)_{k}}\sum_{i=0}^{k}\qbinom{k}{i}_{q}a^ib^{k-i}(bx;q)_{i}\\
        &=\frac{1}{(ax,bx;q)_{\infty}}\sum_{i=0}^{\infty}\frac{(bx;q)_{i}}{(q;q)_{i}}a^i\sum_{k=i}^{\infty}q^{k^2}\frac{y^k}{(q;q)_{k-i}}b^{k-i}\\
        &=\frac{1}{(ax,bx;q)_{\infty}}\sum_{i=0}^{\infty}q^{i^2}\frac{(bx;q)_{i}}{(q;q)_{i}}(ay)^i\sum_{k=0}^{\infty}q^{k^2}\frac{(bq^{2i}y)^{k}}{(q;q)_{k}}\\
        &=\frac{1}{(ax,bx;q)_{\infty}}\sum_{i=0}^{\infty}q^{i^2}\frac{(bx;q)_{i}}{(q;q)_{i}}(ay)^i\RR_{q}(bq^{2i}y).
    \end{align*}
If $by=1$, then from Eq.(\ref{eqn_eqn_garrett})
\begin{align*}
    &\rr(yD_{q})\left\{\frac{1}{(ax,bx;q)_{\infty}}\right\}\\
    &=\frac{1}{(ax,bx;q)_{\infty}}\sum_{i=0}^{\infty}q^{i^2}\frac{(bx;q)_{i}}{(q;q)_{i}}(ay)^i(\RR_{q}(1)q^{-\binom{2i}{2}}a_{2i}(q)-\RR_{q}(q)q^{-\binom{2i}{2}}b_{2i}(q))\\
    &=\frac{1}{(ax,bx;q)_{\infty}(q,q^4;q^5)_{\infty}}\sum_{i=0}^{\infty}q^{-i(3i-2)/2}\frac{(bx;q)_{i}}{(q;q)_{i}}a_{2i}(q)(ay)^i\\
    &\hspace{2cm}-\frac{1}{(ax,bx;q)_{\infty}(q^2,q^3;q^5)_{\infty}}\sum_{i=0}^{\infty}q^{-i(3-2)/2}\frac{(bx;q)_{i}}{(q;q)_{i}}b_{2i}(q)(ay)^i.
\end{align*}
\end{proof}

\begin{theorem}
\begin{align*}
    \sum_{n=0}^{\infty}\s_{n}(x,y;q)\rrr_{n}(a,b)\frac{z^n}{(q;q)_{n}}=\frac{1}{(azx,bzx;q)_{\infty}}\sum_{i=0}^{\infty}q^{i^2}\frac{(bzx;q)_{i}}{(q;q)_{i}}(ay)^i\RR_{q}(bq^{2i}y)
\end{align*}
and if $bzy=1$, then
\begin{align*}
    &\sum_{n=0}^{\infty}\s_{n}(x,y;q)\rrr_{n}(a,b)\frac{z^n}{(q;q)_{n}}\\
    &\hspace{1cm}=\frac{1}{(azx,bzx;q)_{\infty}(q,q^4;q^5)_{\infty}}\sum_{i=0}^{\infty}q^{-i(3i-2)/2}\frac{(bzx;q)_{i}}{(q;q)_{i}}a_{2i}(q)(azy)^i\\
    &\hspace{2cm}-\frac{1}{(azx,bzx;q)_{\infty}(q^2,q^3;q^5)_{\infty}}\sum_{i=0}^{\infty}q^{-i(3-2)/2}\frac{(bzx;q)_{i}}{(q;q)_{i}}b_{2i}(q)(azy)^i,
\end{align*}    
where
\begin{equation}
    \rrr_{n}(a,b)=\sum_{k=0}^{\infty}\qbinom{n}{k}_{q}a^{n-k}b^k
\end{equation}
is the generalized Rogers-Szeg\"o polynomials \cite{saad}.
\end{theorem}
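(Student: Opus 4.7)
The plan is to follow the template of the three preceding theorems in this section: pull the operator $\rr(yD_q)$ out of the series on the left using the identity $\s_n(x,y;q)=\rr(yD_q)\{x^n\}$, identify the resulting inner generating function in closed form, and then invoke the immediately preceding theorem. For the $bzy=1$ specialization, Garrett's expansion~(\ref{eqn_eqn_garrett}) will decompose the Ramanujan factor into pieces involving $a_{2i}(q)$ and $b_{2i}(q)$.

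Concretely, the first step is to use $\s_n(x,y;q)=\rr(yD_q)\{x^n\}$ together with the (formal) linearity/continuity of $\rr(yD_q)$ on power series in $x$ to rewrite the left-hand side as
$$\rr(yD_q)\left\{\sum_{n=0}^{\infty}\rrr_n(a,b)\frac{(zx)^n}{(q;q)_n}\right\}.$$
Next, expanding $\rrr_n(a,b)=\sum_{k=0}^n\qbinom{n}{k}_q a^{n-k}b^k$, interchanging the order of summation, and substituting $m=n-k$, the inner double sum factors as
$$\sum_{k=0}^{\infty}\frac{(bzx)^k}{(q;q)_k}\cdot\sum_{m=0}^{\infty}\frac{(azx)^m}{(q;q)_m}=\frac{1}{(azx,bzx;q)_\infty}$$
by two applications of the $q$-binomial theorem~(\ref{eqn_qbin_the}). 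Invoking the preceding theorem with $a\mapsto az$ and $b\mapsto bz$ then evaluates $\rr(yD_q)\{1/(azx,bzx;q)_\infty\}$ and yields the first claim.

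For the specialization $bzy=1$, the factor $\RR_q(bzq^{2i}y)$ reduces to $\RR_q(q^{2i})$, and Garrett's formula~(\ref{eqn_eqn_garrett}) with $k=2i$ will split each summand into a term carrying $\RR_q(1)q^{-\binom{2i}{2}}a_{2i}(q)$ and a term carrying $\RR_q(q)q^{-\binom{2i}{2}}b_{2i}(q)$. Using the product representations $\RR_q(1)=1/(q,q^4;q^5)_\infty$ and $\RR_q(q)=1/(q^2,q^3;q^5)_\infty$ and collecting $q$-powers produces the two displayed sums. There is no conceptual obstacle; the only delicate point is the $q$-exponent arithmetic in this last step, where $q^{i^2}$ must be combined with $q^{-\binom{2i}{2}}$ to match the exponent recorded in the preceding theorem, a calculation identical in spirit to the one already performed there.
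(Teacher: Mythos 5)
Your proposal is correct and follows essentially the same route as the paper: write $\s_{n}(x,y;q)=\rr(yD_{q})\{x^{n}\}$, exchange operator and sum, identify $\sum_{n}\rrr_{n}(a,b)\frac{(zx)^{n}}{(q;q)_{n}}=\frac{1}{(azx,bzx;q)_{\infty}}$ (a step you additionally justify via the $q$-binomial theorem, where the paper simply quotes the generating function from \cite{saad}), apply the preceding theorem with $a\mapsto az$, $b\mapsto bz$, and then specialize with Garrett's formula at $k=2i$. One remark: carried out carefully, your substitution yields $(azy)^{i}\RR_{q}(bzq^{2i}y)$, which is consistent with the $bzy=1$ specialization, and the combined exponent is $q^{i^{2}-\binom{2i}{2}}=q^{-i(i-1)}$; the printed theorem's $(ay)^{i}\RR_{q}(bq^{2i}y)$ and $q^{-i(3i-2)/2}$ appear to be typographical slips in the paper, not gaps in your argument.
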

\begin{proof}
    \begin{align*}
        \sum_{n=0}^{\infty}\s_{n}(x,y;q)\rrr_{n}(a,b)\frac{z^n}{(q;q)_{n}}&=\sum_{n=0}^{\infty}\rr(yD_{q})\{x^n\}\rrr_{n}(a,b)\frac{z^n}{(q;q)_{n}}\\
        &=\rr(yD_{q})\left\{\sum_{n=0}^{\infty}\rrr_{n}(a,b)\frac{(zx)^n}{(q;q)_{n}}\right\}\\
        &=\rr(yD_{q})\left\{\frac{1}{(azx,bzx;q)_{\infty}}\right\}\\
        &=\frac{1}{(azx,bzx;q)_{\infty}}\sum_{i=0}^{\infty}q^{i^2}\frac{(bzx;q)_{i}}{(q;q)_{i}}(ay)^i\RR_{q}(bq^{2i}y).
    \end{align*}
If $by=1$, then
\begin{align*}
    &\sum_{n=0}^{\infty}\s_{n}(x,y;q)\rrr_{n}(a,b)\frac{z^n}{(q;q)_{n}}\\
    &\hspace{1cm}=\frac{1}{(azx,bzx;q)_{\infty}(q,q^4;q^5)_{\infty}}\sum_{i=0}^{\infty}q^{-i(3i-2)/2}\frac{(bzx;q)_{i}}{(q;q)_{i}}a_{2i}(q)(azy)^i\\
        &\hspace{2cm}-\frac{1}{(azx,bzx;q)_{\infty}(q^2,q^3;q^5)_{\infty}}\sum_{i=0}^{\infty}q^{-i(3-2)/2}\frac{(bzx;q)_{i}}{(q;q)_{i}}b_{2i}(q)(azy)^i.
\end{align*}
\end{proof}

\begin{theorem}
    \begin{equation*}
        \sum_{n=0}^{\infty}\s_{n}(x,y;q)\frac{(a;q)_{n}}{(b;q)_{n}(q;q)_{n}}z^n=\frac{(a;q)_{\infty}}{(zx,b;q)_{\infty}}\sum_{k=0}^{\infty}\frac{(b/a;q)_{k}(zx;q)_{k}}{(q;q)_{k}}a^k\RR_{q}(q^kzy).
    \end{equation*}
\end{theorem}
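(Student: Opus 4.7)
The plan is to follow the operator-based template used throughout the section. First I would write $\s_{n}(x,y;q)=\rr(yD_{q})\{x^{n}\}$ (from the first theorem of Section~4), pull the operator outside the sum by linearity, and reduce the claim to evaluating
\[
\rr(yD_{q})\left\{\sum_{n=0}^{\infty}\frac{(a;q)_{n}}{(b;q)_{n}(q;q)_{n}}(zx)^{n}\right\}.
\]
Unlike the earlier theorems, the inner series is a ${}_{2}\phi_{1}$ rather than a simple $q$-exponential or $(\cdot;q)_{\infty}$-factor, so I cannot directly invoke an existing operator identity. The key preparatory step is to recast the inner sum in a form where the already-proven identity $\rr(yD_{q})\{1/(\alpha x;q)_{\infty}\}=\RR_{q}(\alpha y)/(\alpha x;q)_{\infty}$ applies term by term.

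To do this, I would use Eq.(\ref{eqn_iden1}) to write $(a;q)_{n}/(b;q)_{n}=(a;q)_{\infty}(bq^{n};q)_{\infty}/\bigl[(b;q)_{\infty}(aq^{n};q)_{\infty}\bigr]$ and then expand the ratio $(bq^{n};q)_{\infty}/(aq^{n};q)_{\infty}$ by the $q$-binomial theorem (\ref{eqn_qbin_the}) with parameter $b/a$ and variable $aq^{n}$:
\[
\frac{(bq^{n};q)_{\infty}}{(aq^{n};q)_{\infty}}=\sum_{k=0}^{\infty}\frac{(b/a;q)_{k}}{(q;q)_{k}}(aq^{n})^{k}.
\]
Interchanging the order of summation, the inner $n$-sum collapses by the $q$-exponential identity into $\sum_{n}(q^{k}zx)^{n}/(q;q)_{n}=1/(q^{k}zx;q)_{\infty}$, producing
\[
\sum_{n=0}^{\infty}\frac{(a;q)_{n}}{(b;q)_{n}(q;q)_{n}}(zx)^{n}=\frac{(a;q)_{\infty}}{(b;q)_{\infty}}\sum_{k=0}^{\infty}\frac{(b/a;q)_{k}\,a^{k}}{(q;q)_{k}(q^{k}zx;q)_{\infty}}.
\]

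With this rewrite in hand, applying $\rr(yD_{q})$ term by term gives, via the operator identity from the second theorem of Section~4 with $\alpha=q^{k}z$,
\[
\rr(yD_{q})\left\{\frac{1}{(q^{k}zx;q)_{\infty}}\right\}=\frac{\RR_{q}(q^{k}zy)}{(q^{k}zx;q)_{\infty}}.
\]
Finally, using $(q^{k}zx;q)_{\infty}=(zx;q)_{\infty}/(zx;q)_{k}$ from Eq.(\ref{eqn_iden1}) pulls $(zx;q)_{\infty}^{-1}$ outside the $k$-sum and leaves $(zx;q)_{k}$ inside, yielding exactly the claimed right-hand side.

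The main obstacle is the intermediate Heine-type re-expansion of the ${}_{2}\phi_{1}$: once one spots that the right expansion variable is $aq^{n}$ (so that the $q^{nk}$ factor combines with $(zx)^{n}$ into $(q^{k}zx)^{n}$), everything else is a mechanical chain of previously established identities. Apart from this step, the argument is structurally identical to the preceding Srivastava--Agarwal-type theorem.
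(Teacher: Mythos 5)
Your proof is correct and follows essentially the same route as the paper: the central move in both is the $q$-binomial expansion of $(bq^{n};q)_{\infty}/(aq^{n};q)_{\infty}$ in the variable $aq^{n}$, followed by the interchange of sums and the identity $(q^{k}zx;q)_{\infty}=(zx;q)_{\infty}/(zx;q)_{k}$. The only (immaterial) difference is that you pull out $\rr(yD_{q})$ first and then apply the operator identity $\rr(yD_{q})\{1/(q^{k}zx;q)_{\infty}\}=\RR_{q}(q^{k}zy)/(q^{k}zx;q)_{\infty}$ term by term, whereas the paper expands first and then cites its generating-function theorem $\sum_{n}\s_{n}(x,y;q)\frac{(q^{k}z)^{n}}{(q;q)_{n}}=\RR_{q}(q^{k}zy)/(q^{k}zx;q)_{\infty}$, which encapsulates the same operator computation.
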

\begin{proof}
From the $q$-theorem binomial,
\begin{equation*}
    \frac{(bq^n;q)_{\infty}}{(aq^n;q)_{\infty}}=\sum_{k=0}^{\infty}\frac{(b/a;q)_{k}}{(q;q)_{k}}(aq^n)^k.
\end{equation*}
Hence
    \begin{align*}
        \sum_{n=0}^{\infty}\s_{n}(x,y;q)\frac{(a;q)_{n}}{(b;q)_{n}(q;q)_{n}}z^n&=\frac{(a;q)_{\infty}}{(b;q)_{\infty}}\sum_{n=0}^{\infty}\s_{n}(x,y;q)\frac{(bq^n;q)_{n}}{(aq^n;q)_{n}(q;q)_{n}}z^n\\
        &=\frac{(a;q)_{\infty}}{(b;q)_{\infty}}\sum_{n=0}^{\infty}\s_{n}(x,y;q)\frac{z^n}{(q;q)_{n}}\sum_{k=0}^{\infty}\frac{(b/a;q)_{k}}{(q;q)_{k}}(aq^n)^k\\
        &=\frac{(a;q)_{\infty}}{(b;q)_{\infty}}\sum_{k=0}^{\infty}\frac{(b/a;q)_{k}}{(q;q)_{k}}a^k\sum_{n=0}^{\infty}\s_{n}(x,y;q)\frac{(q^kz)^n}{(q;q)_{n}}\\
        &=\frac{(a;q)_{\infty}}{(b;q)_{\infty}}\sum_{k=0}^{\infty}\frac{(b/a;q)_{k}}{(q;q)_{k}}a^k\frac{\RR_{q}(q^kzy)}{(q^kzx;q)_{\infty}}\\
        &=\frac{(a;q)_{\infty}}{(zx,b;q)_{\infty}}\sum_{k=0}^{\infty}\frac{(b/a;q)_{k}(zx;q)_{k}}{(q;q)_{k}}a^k\RR_{q}(q^kzy).
    \end{align*}
\end{proof}

\section{Mehler's formulas for $\s_{n}(x,y;q)$}

\begin{theorem}
    \begin{align*}
        &\sum_{n=0}^{\infty}\s_{n}(x,y;q)\s_{n}(w,z;q)\frac{t^n}{(q;q)_{n}}\\
        &\hspace{1cm}=\frac{1}{(twx;q)_{\infty}}\sum_{k=0}^{\infty}\frac{q^{2k^2}(twx;q)_{k}}{(q;q)_{k}}(tyz)^k\RR_{q}(tzq^{2k}x)\RR_{q}(tyq^{2k}w)
    \end{align*}
\end{theorem}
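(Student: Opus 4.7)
The plan is to reduce the bilinear sum to a single application of the operator $\rr(zD_{q})$, using the generating function $\sum_{n\ge 0}\s_{n}(x,y;q)z^{n}/(q;q)_{n}=\RR_{q}(zy)/(zx;q)_{\infty}$ already established in Section~4. Writing $\s_{n}(w,z;q)=\rr(zD_{q,w})\{w^{n}\}$, where $D_{q,w}$ acts on the variable $w$, and interchanging the operator with the summation over $n$, the left-hand side equals
\[
\rr(zD_{q,w})\left\{\sum_{n=0}^{\infty}\s_{n}(x,y;q)\frac{(tw)^{n}}{(q;q)_{n}}\right\}
=\rr(zD_{q,w})\left\{\frac{\RR_{q}(twy)}{(twx;q)_{\infty}}\right\}.
\]

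Next I would compute this operator using the Leibniz rule (\ref{eqn_leibniz}) applied to the product $\RR_{q}(tyw)\cdot(twx;q)_{\infty}^{-1}$. The two ingredients are the identity $D_{q,w}^{k}\{\RR_{q}(tyw)\}=(ty)^{k}q^{k^{2}}\RR_{q}(tyq^{2k}w)$ from Section~3 and the identity (\ref{eqn_iden5}), giving $D_{q,w}^{n-k}\{(txq^{k}w;q)_{\infty}^{-1}\}=(txq^{k})^{n-k}/(txq^{k}w;q)_{\infty}$. Plugging these into the definition of $\rr(zD_{q,w})$ and setting $j=n-k$ yields a double sum indexed by $(k,j)$ in which the $q$-exponents combine as
\[
q^{n^{2}}\cdot q^{k(k-n)}\cdot q^{k^{2}}=q^{(k+j)^{2}-kj+k^{2}}=q^{\,2k^{2}+kj+j^{2}}.
\]
Splitting this as $q^{2k^{2}}\cdot q^{j^{2}}\cdot q^{kj}$ and absorbing the factor $q^{kj}$ into $(txq^{k})^{j}$ to form $(txzq^{2k})^{j}$, the inner $j$-sum collapses to $\RR_{q}(txzq^{2k})$. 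The outer $k$-sum then carries the prefactors $q^{2k^{2}}(tyz)^{k}/(q;q)_{k}$ together with $\RR_{q}(tyq^{2k}w)$ and $1/(twxq^{k};q)_{\infty}$; the last factor is rewritten as $(twx;q)_{k}/(twx;q)_{\infty}$ by (\ref{eqn_iden1}), producing exactly the right-hand side.

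The main obstacle is the bookkeeping of the $q$-powers: one has to check that the factor $q^{k(k-n)}$ from Leibniz, the factor $q^{n^{2}}$ from the operator, and the factor $q^{k^{2}}$ from differentiating $\RR_{q}$ conspire to split cleanly as $q^{2k^{2}}\cdot q^{j^{2}}\cdot q^{kj}$, with the $q^{kj}$ available to convert the argument $txzq^{k}$ into $txzq^{2k}$ so that the $j$-sum is honestly a $\RR_{q}$-series. Once this splitting is verified, the remaining identifications are mechanical, and the symmetry of the final formula in the pairs $(x,y)$ and $(w,z)$ provides a useful sanity check.
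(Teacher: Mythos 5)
Your proof is correct and is essentially the paper's own argument, just with the roles of the pairs $(x,y)$ and $(w,z)$ interchanged: the paper applies $\rr(yD_{q})$ in $x$ to $\RR_{q}(tzx)/(twx;q)_{\infty}$, while you apply $\rr(zD_{q,w})$ in $w$ to $\RR_{q}(tyw)/(twx;q)_{\infty}$, and by the symmetry of the right-hand side the two computations are mirror images. Your exponent bookkeeping ($q^{n^{2}+k(k-n)+k^{2}}$ plus the $q^{kj}$ hidden in $(txq^{k})^{j}$ giving $q^{2k^{2}}(txzq^{2k})^{j}q^{j^{2}}$) matches the paper's step-by-step reduction, so no gap remains.
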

\begin{proof}
    \begin{align*}
        &\sum_{n=0}^{\infty}\s_{n}(x,y;q)\s_{n}(w,z;q)\frac{t^n}{(q;q)_{n}}=\rr(yD_{q})\left\{\sum_{n=0}^{\infty}\s_{n}(w,z;q)\frac{(tx)^n}{(q;q)_{n}}\right\}\\
        &=\rr(yD_{q})\left\{\frac{\RR_{q}(tzx)}{(twx;q)_{\infty}}\right\}\\
        &=\sum_{n=0}^{\infty}q^{n^2}\frac{y^n}{(q;q)_{n}}D_{q}^{n}\left\{\frac{\RR_{q}(tzx)}{(twx;q)_{\infty}}\right\}\\
        &=\sum_{n=0}^{\infty}q^{n^2}\frac{y^n}{(q;q)_{n}}\sum_{k=0}^{n}\qbinom{n}{k}_{q}q^{k(k-n)}D_{q}^{k}\{\RR_{q}(tzx)\}D_{q}^{n-k}\left\{\frac{1}{(twq^kx;q)_{\infty}}\right\}\\
        &=\sum_{n=0}^{\infty}q^{n^2}\frac{y^n}{(q;q)_{n}}\sum_{k=0}^{n}\qbinom{n}{k}_{q}q^{k(k-n)}(tz)^kq^{k^2}\RR_{q}(tzq^{2k}x)\frac{(twq^k)^{n-k}}{(twq^kx;q)_{\infty}}\\
        &=\frac{1}{(twx;q)_{\infty}}\sum_{k=0}^{\infty}\frac{q^{2k^2}(twx;q)_{k}}{(q;q)_{k}}(tyz)^k\RR_{q}(tzq^{2k}x)\sum_{n=0}^{\infty}q^{n^2}\frac{(q^{2k}ty)^nw^{n}}{(q;q)_{n}}\\
        &=\frac{1}{(twx;q)_{\infty}}\sum_{k=0}^{\infty}\frac{q^{2k^2}(twx;q)_{k}}{(q;q)_{k}}(tyz)^k\RR_{q}(tzq^{2k}x)\RR_{q}(tyq^{2k}w)
    \end{align*}
\end{proof}

\begin{theorem}
    \begin{align*}
        &\rr(yD_{q})\{(ax,bx;q)_{\infty}\}\\
        &\hspace{1cm}=(ax,bx;q)_{\infty}\sum_{k=0}^{\infty}\frac{q^{3\binom{k}{2}}(qay)^k}{(ax;q)_{k}(bx;q)_{k}(q;q)_{k}}{}_{0}\phi_{2}\left(
    \begin{array}{c}
         -\\
         bq^kx,0
    \end{array}
    ;q,-q^{2k+1}by
    \right).
    \end{align*}
\end{theorem}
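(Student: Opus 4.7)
The plan is to start from the definition
\[
\rr(yD_{q})\{(ax,bx;q)_{\infty}\}=\sum_{n=0}^{\infty}q^{n^{2}}\frac{y^{n}}{(q;q)_{n}}D_{q}^{n}\{(ax,bx;q)_{\infty}\},
\]
and then substitute the closed form for $D_{q}^{n}\{(ax,bx;q)_{\infty}\}$ given in Eq.~(\ref{eqn_iden7}), writing the prefactor $(ax,bq^{n}x;q)_{\infty}$ as $(ax,bx;q)_{\infty}/(bx;q)_{n}$ so that the $(ax,bx;q)_{\infty}$ factor can be pulled all the way out front.

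Next I would swap the order of the resulting double sum by reindexing with $m=n-k$, using $\qbinom{k+m}{k}_{q}=(q;q)_{k+m}/((q;q)_{k}(q;q)_{m})$ and the splitting $(bx;q)_{k+m}=(bx;q)_{k}(bq^{k}x;q)_{m}$. This turns the double sum into
\[
(ax,bx;q)_{\infty}\sum_{k=0}^{\infty}\frac{(ay)^{k}}{(q;q)_{k}(ax;q)_{k}(bx;q)_{k}}
\sum_{m=0}^{\infty}\frac{(by)^{m}q^{E(k,m)}}{(q;q)_{m}(bq^{k}x;q)_{m}},
\]
where $E(k,m)$ is the total $q$-exponent collected from $q^{n^{2}}$, $q^{\binom{n}{2}}$, and $q^{k(k-n)}$ with $n=k+m$.

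The bookkeeping step is to reduce $E(k,m)$. Expanding $(k+m)^{2}+\binom{k+m}{2}-km$ and using $\binom{k+m}{2}=\binom{k}{2}+\binom{m}{2}+km$ gives $k^{2}+\binom{k}{2}$ in the $k$-only part and $m^{2}+\binom{m}{2}+2km$ in the $m$-only part. Then $k^{2}+\binom{k}{2}=3\binom{k}{2}+k$ turns the outer coefficient into $q^{3\binom{k}{2}}(qay)^{k}$, which is exactly what the statement requires. Similarly $m^{2}+\binom{m}{2}+2km=3\binom{m}{2}+m(2k+1)$, so the inner sum becomes
\[
\sum_{m=0}^{\infty}\frac{q^{3\binom{m}{2}}(q^{2k+1}by)^{m}}{(q;q)_{m}(bq^{k}x;q)_{m}}.
\]

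The final step is to recognize this inner series as a ${}_{0}\phi_{2}$. Since $1+s-r=3$ for $r=0,s=2$, the general term of ${}_{0}\phi_{2}(-;bq^{k}x,0;q,z)$ carries a factor $[(-1)^{m}q^{\binom{m}{2}}]^{3}=(-1)^{m}q^{3\binom{m}{2}}$, so the substitution $z=-q^{2k+1}by$ produces precisely $(q^{2k+1}by)^{m}q^{3\binom{m}{2}}$, matching the inner sum term by term. The main obstacle is purely the exponent arithmetic together with the sign accounting in the ${}_{0}\phi_{2}$; beyond that, every step is an application of results already in the paper.
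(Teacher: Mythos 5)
Your proposal is correct and follows essentially the same route as the paper: apply Eq.~(\ref{eqn_iden7}), rewrite $(bq^{n}x;q)_{\infty}=(bx;q)_{\infty}/(bx;q)_{n}$ to extract $(ax,bx;q)_{\infty}$, interchange the sums with $n=k+m$, and simplify the exponents to $3\binom{k}{2}+k$ and $3\binom{m}{2}+m(2k+1)$ before identifying the inner series as the ${}_{0}\phi_{2}$ with argument $-q^{2k+1}by$. The exponent and sign bookkeeping in your argument checks out, so no gaps.
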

\begin{proof}
    \begin{align*}
        &\rr(yD_{q})\{(ax,bx;q)_{\infty}\}=\sum_{n=0}^{\infty}q^{n^2}\frac{y^n}{(q;q)_{n}}D_{q}^{n}\{(ax,bx;q)_{\infty}\}\\
        &=(ax;q)_{\infty}\sum_{n=0}^{\infty}q^{n^2}\frac{y^n}{(q;q)_{n}}q^{\binom{n}{2}}(bq^nx;q)_{\infty}\sum_{k=0}^{n}\qbinom{n}{k}_{q}q^{k(k-n)}\frac{a^kb^{n-k}}{(ax;q)_{k}}\\
        &=(ax,bx;q)_{\infty}\sum_{n=0}^{\infty}q^{n^2}\frac{y^n}{(q;q)_{n}(bx;q)_{n}}q^{\binom{n}{2}}\sum_{k=0}^{n}\qbinom{n}{k}_{q}q^{k(k-n)}\frac{a^kb^{n-k}}{(ax;q)_{k}}\\
        &=(ax,bx;q)_{\infty}\sum_{k=0}^{\infty}\frac{a^k}{(ax;q)_{k}(q;q)_{k}}\sum_{n=k}^{\infty}q^{n^2}\frac{y^nb^{n-k}}{(q;q)_{n-k}(bx;q)_{n}}q^{\binom{n}{2}}q^{k(k-n)}\\
        &=(ax,bx;q)_{\infty}\sum_{k=0}^{\infty}\frac{q^{\binom{k}{2}+k^2}(ay)^k}{(ax;q)_{k}(bx;q)_{k}(q;q)_{k}}\sum_{n=0}^{\infty}q^{\binom{n}{2}+n^2}\frac{(q^{2k}by)^{n}}{(q;q)_{n}(bq^kx;q)_{n}}\\
        &=(ax,bx;q)_{\infty}\sum_{k=0}^{\infty}\frac{q^{3\binom{k}{2}}(qay)^k}{(ax;q)_{k}(bx;q)_{k}(q;q)_{k}}{}_{0}\phi_{2}\left(
    \begin{array}{c}
         -\\
         bq^kx,0
    \end{array}
    ;q,-q^{2k+1}by
    \right).
    \end{align*}
\end{proof}

\begin{theorem}
    \begin{align*}
        &\sum_{n=0}^{\infty}(-1)^nq^{\binom{n}{2}}\s_{n}(x,y;q)\s_{n}(a,bq^{-n};q)\frac{z^n}{(q;q)_{n}}\\
        &=(azx,bzx;q)_{\infty}\sum_{k=0}^{\infty}\frac{q^{3\binom{k}{2}}(qay)^k}{(azx;q)_{k}(bzx;q)_{k}(q;q)_{k}}{}_{0}\phi_{2}\left(
    \begin{array}{c}
         -\\
         bq^kzx,0
    \end{array}
    ;q,-q^{2k+1}by
    \right).
    \end{align*}
\end{theorem}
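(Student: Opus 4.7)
The plan is to recognize the left-hand side as $\rr(yD_q)$ applied to the function $(azx,bzx;q)_\infty$ and then invoke the immediately preceding theorem (with $a\mapsto az$ and $b\mapsto bz$) to produce the right-hand side. So there are two steps: (i) extract $\rr(yD_q)$ from the sum on the left, and (ii) show that the remaining series collapses to $(azx;q)_\infty(bzx;q)_\infty$.

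For step (i), using the fundamental identity $\s_n(x,y;q)=\rr(yD_q)\{x^n\}$ proved earlier, and the linearity of $\rr(yD_q)$ with respect to the variable $x$, I would write
\begin{equation*}
\sum_{n=0}^{\infty}(-1)^nq^{\binom{n}{2}}\s_{n}(x,y;q)\,\s_{n}(a,bq^{-n};q)\,\frac{z^n}{(q;q)_{n}}=\rr(yD_{q})\!\left\{\sum_{n=0}^{\infty}(-1)^nq^{\binom{n}{2}}\,\s_{n}(a,bq^{-n};q)\,\frac{(zx)^n}{(q;q)_{n}}\right\}.
\end{equation*}
This is the same trick that has been used in every previous generating-function theorem in this paper, so it requires no new idea.

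For step (ii), the substance of the proof, I would expand $\s_{n}(a,bq^{-n};q)=\sum_{k=0}^{n}\qbinom{n}{k}_{q}q^{k^2-nk}a^{n-k}b^{k}$ by definition, insert this into the inner series, and change the summation order by setting $j=n-k$. The crucial simplification is the exponent identity
\begin{equation*}
\binom{j+k}{2}+k^{2}-(j+k)k=\binom{j}{2}+\binom{k}{2},
\end{equation*}
which comes from the binomial-coefficient identities listed in the introduction. After this cancellation the double sum factors as
\begin{equation*}
\left(\sum_{j=0}^{\infty}\frac{(-1)^{j}q^{\binom{j}{2}}}{(q;q)_{j}}(azx)^{j}\right)\!\left(\sum_{k=0}^{\infty}\frac{(-1)^{k}q^{\binom{k}{2}}}{(q;q)_{k}}(bzx)^{k}\right)=(azx;q)_{\infty}(bzx;q)_{\infty}
\end{equation*}
by the series expansion of $\E_{q}$ (equivalently, a limiting case of the $q$-binomial theorem). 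This exponent cancellation is the one place that could be a nuisance, because the deliberate choice of the second argument $bq^{-n}$ is exactly what makes the $nk$ cross-term disappear; verifying it carefully is the main technical content.

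Combining the two steps, the left-hand side equals $\rr(yD_{q})\{(azx,bzx;q)_{\infty}\}$, and this is precisely the quantity computed in the previous theorem, after replacing $a$ by $az$ and $b$ by $bz$ throughout. Writing out that substitution gives the claimed ${}_{0}\phi_{2}$ expansion with the factor $(azx,bzx;q)_{\infty}$ in front and the inner ${}_{0}\phi_{2}$ having lower parameter $bq^{k}zx$, completing the proof.
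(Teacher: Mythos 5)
Your proof follows the same route as the paper's: pull $\rr(yD_{q})$ out of the sum, identify the inner series as $(azx,bzx;q)_{\infty}$ (your explicit exponent computation $\binom{j+k}{2}+k^{2}-(j+k)k=\binom{j}{2}+\binom{k}{2}$ supplies a step the paper leaves implicit), and then apply the preceding theorem with $a\mapsto az$, $b\mapsto bz$. The only caveat is that this substitution actually yields $(qazy)^{k}$ and argument $-q^{2k+1}bzy$ in the final expansion, so the $(qay)^{k}$ and $-q^{2k+1}by$ appearing in the stated formula seem to be typos (missing factors of $z$) inherited from the paper rather than a defect in your argument.
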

\begin{proof}
    \begin{align*}
        &\sum_{n=0}^{\infty}(-1)^nq^{\binom{n}{2}}\s_{n}(a,b;q)\s_{n}(x,yq^{-n};q)\frac{z^n}{(q;q)_{n}}\\
        &=\sum_{n=0}^{\infty}(-1)^nq^{\binom{n}{2}}\rr(yD_{q})\{x^n\}\s_{n}(a,bq^{-n};q)\frac{z^n}{(q;q)_{n}}\\
        &=\rr(yD_{q})\left\{\sum_{n=0}^{\infty}(-1)^nq^{\binom{n}{2}}\s_{n}(a,bq^{-n};q)\frac{(xz)^n}{(q;q)_{n}}\right\}\\
        &=\rr(yD_{q})\{(azx,bzx;q)_{\infty}\}\\
        &=(azx,bzx;q)_{\infty}\sum_{k=0}^{\infty}\frac{q^{3\binom{k}{2}}(qay)^k}{(azx;q)_{k}(bzx;q)_{k}(q;q)_{k}}{}_{0}\phi_{2}\left(
    \begin{array}{c}
         -\\
         bq^kzx,0
    \end{array}
    ;q,-q^{2k+1}by
    \right).
    \end{align*}
\end{proof}

\section{Rogers formulas for $\s_{n}(x,y;q)$}

\begin{theorem}
    \begin{align*}
        \sum_{n=0}^{\infty}\sum_{m=0}^{\infty}(-1)^{n}q^{\binom{n}{2}}\s_{n+m}(x,y;q)\frac{t^n}{(q;q)_{n}}\frac{s^m}{(q;q)_{m}}=\frac{(tx;q)_{\infty}}{(sx;q)_{\infty}}{}_{1}\phi_{2}\left(
        \begin{array}{c}
         t/s\\
         tx,0
        \end{array}
        ;q,qy
        \right).
    \end{align*}
\end{theorem}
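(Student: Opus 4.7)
The plan is to recognize the left-hand side as a single application of the operator $\rr(yD_{q})$ to a product of two $q$-exponentials, and then invoke the Srivastava-Agarwal-type identity already established in this section.

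Using $\s_{n+m}(x,y;q)=\rr(yD_{q})\{x^{n+m}\}$ together with the linearity of $\rr(yD_{q})$, I would first pull the operator out of the double summation, obtaining
\[
\sum_{n=0}^{\infty}\sum_{m=0}^{\infty}(-1)^{n}q^{\binom{n}{2}}\s_{n+m}(x,y;q)\frac{t^{n}s^{m}}{(q;q)_{n}(q;q)_{m}}
=\rr(yD_{q})\left\{\sum_{n=0}^{\infty}\sum_{m=0}^{\infty}(-1)^{n}q^{\binom{n}{2}}\frac{t^{n}s^{m}}{(q;q)_{n}(q;q)_{m}}x^{n+m}\right\}.
\]
The key step is then to split $x^{n+m}=x^{n}x^{m}$, which makes the inner double sum separable, and to identify
\[
\left(\sum_{n=0}^{\infty}(-1)^{n}q^{\binom{n}{2}}\frac{(tx)^{n}}{(q;q)_{n}}\right)\left(\sum_{m=0}^{\infty}\frac{(sx)^{m}}{(q;q)_{m}}\right)=(tx;q)_{\infty}\cdot\frac{1}{(sx;q)_{\infty}},
\]
using the standard evaluations $\E_{q}(-tx)=(tx;q)_{\infty}$ and $\e_{q}(sx)=1/(sx;q)_{\infty}$.

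Finally, I would apply the Srivastava-Agarwal-type theorem
\[
\rr(yD_{q})\left\{\frac{(azx;q)_{\infty}}{(zx;q)_{\infty}}\right\}=\frac{(azx;q)_{\infty}}{(zx;q)_{\infty}}\,{}_{1}\phi_{2}\left(\begin{array}{c}a\\azx,0\end{array};q,qy\right),
\]
with the parameter specialization $a=t/s$ and $z=s$, so that $azx=tx$ and $zx=sx$; this produces exactly the claimed right-hand side.

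No serious obstacle is expected. The whole argument hinges on the factorization $x^{n+m}=x^{n}x^{m}$ and the recognition of the resulting single sums as a small $q$-exponential $\E_{q}$ and a big $q$-exponential $\e_{q}$; once these are identified the conclusion is a one-line citation of a prior theorem. The interchange of operator and summation is legitimate at the formal level because $\rr(yD_{q})$ is a power series in $y$ whose $y^{k}$-coefficient is a finite-order $q$-differential operator in $x$, so the identity holds as an equality of formal power series in $y,t,s$.
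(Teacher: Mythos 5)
Your proposal is correct and follows essentially the same route as the paper: write $\s_{n+m}(x,y;q)=\rr(yD_{q})\{x^{n+m}\}$, pull the operator out, factor the double sum into $\E_{q}(-tx)\,\e_{q}(sx)=(tx;q)_{\infty}/(sx;q)_{\infty}$, and then apply the earlier Srivastava--Agarwal-type evaluation of $\rr(yD_{q})$ on such a quotient with $a=t/s$. The only difference is cosmetic: you make the parameter specialization $a=t/s$, $z=s$ explicit, which the paper leaves implicit.
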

\begin{proof}
    \begin{align*}
        &\sum_{n=0}^{\infty}\sum_{m=0}^{\infty}(-1)^{n}q^{\binom{n}{2}}\s_{n+m}(x,y;q)\frac{t^n}{(q;q)_{n}}\frac{s^m}{(q;q)_{m}}\\
        &\hspace{1cm}=\sum_{n=0}^{\infty}\sum_{m=0}^{\infty}(-1)^{n}q^{\binom{n}{2}}\rr(yD_{q})\{x^{n+m}\}\frac{t^n}{(q;q)_{n}}\frac{s^m}{(q;q)_{m}}\\
        &\hspace{1cm}=\rr(yD_{q})\left\{\sum_{n=0}^{\infty}\sum_{m=0}^{\infty}(-1)^{n}q^{\binom{n}{2}}\frac{(tx)^n}{(q;q)_{n}}\frac{(sx)^m}{(q;q)_{m}}\right\}\\
        &\hspace{1cm}=\rr(yD_{q})\left\{\sum_{n=0}^{\infty}(-1)^{n}q^{\binom{n}{2}}\frac{(tx)^n}{(q;q)_{n}}\sum_{m=0}^{\infty}\frac{(sx)^m}{(q;q)_{m}}\right\}\\
        &\hspace{1cm}=\rr(yD_{q})\left\{\frac{(tx;q)_{\infty}}{(sx;q)_{\infty}}\right\}\\
        &\hspace{1cm}=\frac{(tx;q)_{\infty}}{(sx;q)_{\infty}}{}_{1}\phi_{2}\left(
        \begin{array}{c}
         t/s\\
         tx,0
        \end{array}
        ;q,qy
        \right).
    \end{align*}
\end{proof}

\begin{theorem}
    \begin{align*}
        \sum_{n=0}^{\infty}\sum_{m=0}^{\infty}\s_{n+m}(x,y;q)\frac{t^n}{(q;q)_{n}}\frac{s^m}{(q;q)_{m}}=\frac{1}{(tx,sx;q)_{\infty}}\sum_{i=0}^{\infty}q^{i^2}\frac{(sx;q)_{i}}{(q;q)_{i}}(ty)^i\RR_{q}(sq^{2i}y).
    \end{align*}
\end{theorem}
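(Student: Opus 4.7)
The plan is to follow exactly the template used by the previous Rogers-type theorem in the section, exploiting the operator identity $\s_{n+m}(x,y;q)=\rr(yD_{q})\{x^{n+m}\}$ to convert the double sum into a single application of $\rr(yD_{q})$ to a generating function that we can recognize in closed form.

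First I would rewrite the left-hand side by inserting $\s_{n+m}(x,y;q)=\rr(yD_{q})\{x^{n+m}\}$, so that the expression becomes
\[
\rr(yD_{q})\left\{\sum_{n=0}^{\infty}\sum_{m=0}^{\infty}\frac{(tx)^n}{(q;q)_{n}}\frac{(sx)^m}{(q;q)_{m}}\right\},
\]
assuming the standard formal interchange of $\rr(yD_{q})$ with the double sum (as used throughout the paper). Next I would factor the inner double sum and apply the $q$-exponential identity $\sum_{k\ge 0}z^{k}/(q;q)_{k}=1/(z;q)_{\infty}$ twice, collapsing it to $1/(tx,sx;q)_{\infty}$.

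At that point the problem reduces to evaluating $\rr(yD_{q})\{1/(tx,sx;q)_{\infty}\}$, which is exactly the statement of the earlier theorem on $\rr(yD_{q})\{1/(ax,bx;q)_{\infty}\}$ with the substitutions $a\mapsto t$, $b\mapsto s$. Invoking that theorem immediately yields
\[
\frac{1}{(tx,sx;q)_{\infty}}\sum_{i=0}^{\infty}q^{i^{2}}\frac{(sx;q)_{i}}{(q;q)_{i}}(ty)^{i}\RR_{q}(sq^{2i}y),
\]
which matches the right-hand side of the claim.

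There is essentially no technical obstacle: the whole argument is a one-line reduction to a theorem proved earlier. The only point that deserves a brief sanity check is the assignment of variables $a\leftrightarrow t$ and $b\leftrightarrow s$ in the previous formula, since the roles of the two indices are not symmetric in the answer (the $\RR_{q}$ factor contains $s$, not $t$); once that is pinned down, the rest follows by direct substitution.
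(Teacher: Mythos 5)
Your proposal is correct and follows the paper's own proof essentially verbatim: insert $\s_{n+m}(x,y;q)=\rr(yD_{q})\{x^{n+m}\}$, interchange the operator with the double sum, collapse the two $q$-exponential series to $1/(tx,sx;q)_{\infty}$, and invoke the earlier theorem on $\rr(yD_{q})\{1/(ax,bx;q)_{\infty}\}$ with $a=t$, $b=s$. The variable assignment you flag (so that $\RR_{q}$ carries $s$ rather than $t$) is exactly the one the paper uses, so nothing further is needed.
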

\begin{proof}
    \begin{align*}
        \sum_{n=0}^{\infty}\sum_{m=0}^{\infty}\s_{n+m}(x,y;q)\frac{t^n}{(q;q)_{n}}\frac{s^m}{(q;q)_{m}}&=\sum_{n=0}^{\infty}\sum_{m=0}^{\infty}\rr(yD_{q})\{x^{n+m}\}\frac{t^n}{(q;q)_{n}}\frac{s^m}{(q;q)_{m}}\\
        &=\rr(yD_{q})\left\{\sum_{n=0}^{\infty}\sum_{m=0}^{\infty}\frac{(tx)^n}{(q;q)_{n}}\frac{(sx)^m}{(q;q)_{m}}\right\}\\
        &=\rr(yD_{q})\left\{\sum_{n=0}^{\infty}\frac{(tx)^n}{(q;q)_{n}}\sum_{m=0}^{\infty}\frac{(sx)^m}{(q;q)_{m}}\right\}\\
        &=\rr(yD_{q})\left\{\frac{1}{(tx,sx;q)_{\infty}}\right\}\\
        &=\frac{1}{(tx,sx;q)_{\infty}}\sum_{i=0}^{\infty}q^{i^2}\frac{(sx;q)_{i}}{(q;q)_{i}}(ty)^i\RR_{q}(sq^{2i}y).
    \end{align*}
\end{proof}

\Addresses

\end{document}